\newcolumntype{C}[1]{>{\centering\arraybackslash}m{#1}}
\tikzstyle{none}=[inner sep=0pt]
\newtheorem{theorem}{Theorem}[section]
\newtheorem{theorem*}{Theorem}
\newtheorem{proposition}[theorem]{Proposition}
\newtheorem{lemma}[theorem]{Lemma}
\newtheorem{corollary}[theorem]{Corollary}
\newtheorem{corollary*}[theorem]{Corollary}
\newtheorem{remark}[theorem]{Remark}
\newtheorem{example}[theorem]{Example} 
\theoremstyle{definition}
\newtheorem{defn}[theorem]{Definition}
\definecolor{aquamarine}{rgb}{0.5, 1.0, 0.83}
\definecolor{princetonorange}{rgb}{1.0, 0.56, 0.0}
\definecolor{caribbeangreen}{rgb}{0.0, 0.8, 0.6}
\definecolor{bunired}{rgb}{0.8, 0.0, 0.0}
\definecolor{cdgreen}{rgb}{0.0, 0.42, 0.24}
\definecolor{lavender(floral)}{rgb}{0.71, 0.49, 0.86}
\definecolor{bluedefrance}{rgb}{0.19, 0.55, 0.91}
\definecolor{iris}{rgb}{0.35, 0.31, 0.81}
\definecolor{darkgreen}{rgb}{0.33, 0.42, 0.18}
\newcommand{\bN}{\mathbb N}
\newcommand{\bK}{\mathbb K}
\newcommand{\bC}{\mathbb C}
\newcommand{\bZ}{\mathbb Z}
\newcommand{\tC}{{\tt C}}
\newcommand{\tG}{{\tt G}}
\newcommand{\tH}{{\tt H}}
\newcommand{\cP}{\mathcal{P}}
\renewcommand{\H}{\mathrm{H}}
\renewcommand{\bC}{\mathbf{C}}
\newcommand{\Rep}{\mathbf{Rep}}
\newcommand{\Graph}{\mathbf{Graph}}
\newcommand{\cV}{\mathcal{V}}
\newcommand{\cM}{\mathcal{M}}
\renewcommand{\cP}{\mathcal{P}}
\newcommand{\op}{{\rm op}}
\DeclareMathOperator{\Ext}{\mathrm{Ext}}
\DeclareMathOperator{\MH}{\mathrm{MH}}
\DeclareMathOperator{\MC}{\mathrm{MC}}
\DeclareMathOperator{\Hom}{\mathrm{Hom}}
\newcommand{\Mod}{R\text{-{\bf Mod}}}
\address{Luigi Caputi: Dipartimento di Matematica, Universit\`a di Bologna, 40126 Bologna, IT}
\email{luigi.caputi@unibo.it}
\address{Carlo Collari: Dipartimento di Matematica, Universit\`a di Pisa, 56127 Pisa, IT}
\email{carlo.collari@dm.unipi.it}
\title{On finite generation in magnitude (co)homology, and its torsion}
 \author{Luigi Caputi }
 \author{Carlo Collari }
\begin{document}

\begin{abstract}
The aim of this paper is to apply the framework developed by Sam and Snowden to study structural properties of graph homologies, in the spirit of Ramos, Miyata and Proudfoot. 
Our main results concern the magnitude homology of graphs introduced by Hepworth and Willerton, { and we prove that it is a finetely generated functor (on graphs of bounded genus).} More precisely, for graphs of bounded genus, we prove that magnitude cohomology,  in each homological degree, has rank which grows at most polynomially in the number of vertices, and that its torsion is bounded. As a consequence, we obtain  analogous results for path homology of (undirected) graphs. 
\end{abstract}
\maketitle
\section*{Introduction}

Magnitude is a numerical invariant of isometry classes of metric spaces, and it 
finds its motivations in the study of species similarity~\cite{biologymagnitude} -- see~\cite{ Leinsterbook} for a general overview on the topic. 
From the mathematical point of view, the definition of magnitude was motivated, and naturally arose, from considerations on Euler characteristic in enriched category theory \cite{MR3084566} -- see also~\cite{MR3054629, Leinsterbook}.
Aside from category theory and metric geometry, magnitude has also many interesting connections with other areas of mathematics, such as differential geometry~\cite{MR3158044},  minimal surfaces~\cite{MR4520035}, geometric measure theory, and potential theory~\cite{MR3701739}.

Graphs, endowed with the shortest path distance, are prominent examples of metric spaces. 
In this context, magnitude was studied first in~\cite{magnitudelei}. 
Leinster showed that magnitude of graphs has a wealth of interesting properties.
Among those, we have that magnitude is a rational function which can be expressed as an integral power series, that is multiplicative with respect to the Cartesian product of graphs, and that it satisfies  an inclusion-exclusion formula.
Remarkably, 
magnitude shares similar features with -- and yet is not determined by -- the Tutte polynomial, see~\cite{magnitudelei}.

Rather than on magnitude itself, here we focus on its categorification: magnitude (co)ho\-mo\-lo\-gy, as defined by Hepworth and Willerton~\cite{richardHHA,MagnitudeCohomology}, see also~\cite{LeinsterShulman}. 
As with magnitude, also its categorification has  attracted some attention -- see, for instance, the recent papers~\cite{magnitue_discretemorsetheory, asao_girth, SadzanovicSummers, asao}. 
In this context, {categorification} means to associate to a numerical (or polynomial) invariant a whole homology theory, whose Euler characteristic recovers the original invariant. The simplest example of categorification is given by the classical Euler-Poincar\'e characteristic of simplicial complexes, which is categorified by simplicial homology.
The categorification of the Jones polynomial introduced by Khovanov~\cite{Khovanov} has shown the advantage of the  homological and categorical approaches to the study of (polynomial) invariants of graphs, knots, \emph{etc}.
After Khovanov's discovery, the interest in categorification of 
knots and graph invariants, and their properties, skyrocketed.
Among the well-known graph invariants which have been categorified using Khovanov's framework, we can find the chromatic polynomial~\cite{HGRong}, and the Tutte polynomial~\cite{MR2263058}. Magnitude homology follows similar ideas.
A consequence of the categorification process is that it
brings more refined invariants. This is also the case for magnitude homology: there exist graphs with {the} same magnitude, but non-isomorphic magnitude homology groups~\cite[Appendix~A]{magnitue_discretemorsetheory}. Furthermore, the categorical approach allows us to explain some intrinsic properties of the magnitude of graphs, in homological terms; for example, the multiplicative property with respect to Cartesian products descends from a K\"unneth theorem, and a Mayer-Vietoris theorem categorifies the inclusion-exclusion formula.

\subsection*{Statement of results} We now come to the main point of this paper. It was asked in \cite{richardHHA} whether there are graphs with torsion in their magnitude homology. This question was positively answered, first by Kaneta and Yoshinaga~\cite{MR4275098}, and then extended by Sazdanovic and Summers~\cite{SadzanovicSummers}. Computations show that any finitely generated Abelian group appears as a subgroup of the magnitude homology of a graph~\cite[Theorem 3.14]{SadzanovicSummers}. However, to get such torsion, Sazdanovic and Summers increased the combinatorial complexity of the graphs. This means that, to get new torsion, they increase the combinatorial genus of the graphs. It is not clear from their proof whether this behaviour is a structural property of magnitude homology, or {if} it depends on the methods developed in~\cite{MR4275098, SadzanovicSummers}. Instead of working with magnitude homology, here we use its cohomological version~\cite{MagnitudeCohomology}, and we will confine ourselves within the category of finite, connected, undirected graphs. As magnitude homology and cohomology are related by a short exact sequence~\cite[Remark~2.5]{MagnitudeCohomology}, passing to cohomology is not a restriction.

The main goal of this work is to prove that, to get torsion of higher order in magnitude cohomology, it is indeed necessary to increase the combinatorial complexity of the graphs.  Therefore, this behaviour  is somehow due to a structural property of magnitude (co)homology.
To achieve such result, we borrow methods from representation stability~\cite{FI-modules} of (combinatorial) categories,    as recently developed by Sam and Snowden~\cite{Grobner_Cat}. 
The categorical viewpoint enables us to gain a deeper understanding of the behaviour of magnitude (co)homology (and of its torsion), by looking at combinatorial properties of the category of graphs considered. 
Our ideas were inspired by works from Ramos, Miyata and Proudfoot~\cite{  minor_twocolumns, contractioncat,topological_minor_thm}, who proved similar statements for matching complexes and unordered configuration spaces of graphs. 
Recall that the magnitude cohomology of a graph $\tG$, with coefficients in a {commutative} ring $R$, is a bigraded {$R$-}module~$\MH^*_*(\tG;R) = \bigoplus_{k,l} \MH^k_l(\tG;R)$, where~$k$ is  the cohomological grading, and $l$ is the length, {see Definition~\ref{def:magnitude}}. The main result of the paper is the following -- \emph{cf.}~Theorem~\ref{thm:torsionmagnitude};

\begin{theorem*}\label{thm:main}
For every pair of integers $k, g\geq 0$, there exists $m = m(g,k) \in \bZ$ which annihilates the torsion subgroup of $\MH^{k}_{*}(\tG;\bZ)$, for each graph~$\tG$ of genus at most $g$. 
\end{theorem*}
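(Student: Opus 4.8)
The plan is to leverage the machinery of quasi-Gröbner categories and Noetherianity of their module categories, applied to a suitable category of graphs with contractions. The key realization is that the collection of magnitude cohomology groups $\MH^k_\ast(\tG;\bZ)$, as $\tG$ ranges over graphs of genus at most $g$, should be packaged into a single module over a combinatorial category $\cG$ — the category of planar graphs of bounded genus with marked edges and contractions mentioned at the end of the abstract. The promised result that this category is quasi-Gröbner is exactly what makes the $R$-module category $\Fun(\cG, R\text{-Mod})$ Noetherian over any Noetherian ring $R$, in particular over $\bZ$. So the first step is to verify that $\tG \mapsto \MH^k_\ast(\tG;\bZ)$ is a \emph{finitely generated} $\cG$-module: one exhibits a functorial structure, i.e.\ shows that edge contractions (and whatever other morphisms generate $\cG$) induce maps on magnitude cohomology in each fixed cohomological degree $k$, and that the resulting module is generated by the cohomology of finitely many ``small'' graphs. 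This functoriality is the technical heart and I expect it to require a careful analysis of how the magnitude chain complex behaves under contraction of a marked edge.

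**Next I would** invoke the Noetherian property: a finitely generated module over a Noetherian ring-linear category has the property that all its subquotients are finitely generated, and — crucially for torsion — one can run the argument over $\bZ$ directly rather than only over fields. The torsion subgroup $T^k(\tG) \subseteq \MH^k_\ast(\tG;\bZ)$ assembles into a \emph{submodule} of the finitely generated $\cG$-module $\MH^k_\ast(-;\bZ)$, hence is itself finitely generated over $\cG$. If $T^k$ is generated, as a $\cG$-module, by elements lying in the magnitude cohomology of finitely many graphs $\tG_1, \dots, \tG_r$, and if each of those finitely many abelian groups $T^k(\tG_i)$ is finitely generated (which it is, since magnitude cohomology of a finite graph in fixed degree is a finitely generated abelian group), then setting $m = m(g,k)$ to be the least common multiple of the exponents of $T^k(\tG_1), \dots, T^k(\tG_r)$ gives an integer that kills every generator. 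Since the $\cG$-module structure maps are group homomorphisms, $m$ then annihilates the image of these generators under all morphisms, hence all of $T^k(\tG)$ for every $\tG$ of genus $\le g$.

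**The main obstacle** will be establishing the $\cG$-module structure on magnitude cohomology with the correct variance and checking finite generation — in other words, matching the functoriality of $\MH^k$ under edge contractions with the morphisms of the quasi-Gröbner category $\cG$. Contraction of an edge is a natural operation on graphs but it is not an isometry and need not respect the shortest-path metric in an obvious way, so one must understand precisely how contracting a marked edge interacts with the length grading $l$ and with the differential in the magnitude chain complex; this is presumably where the restriction to \emph{marked} edges and to \emph{bounded genus} enters, ensuring both that the relevant category is quasi-Gröbner and that the induced maps are well-behaved. A secondary point to be careful about is that cohomology (rather than homology) is used precisely so that contractions induce maps in the right direction to make $\MH^k_\ast(-;\bZ)$ a covariant $\cG$-module; one should confirm that the short exact sequence relating magnitude homology and cohomology \cite[Remark~2.5]{MagnitudeCohomology} is compatible with these functorial maps, so that the bounded-torsion conclusion can, if desired, be transported back to homology. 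Once functoriality and finite generation are in place, the deduction of the bound $m(g,k)$ is formal, as sketched above.
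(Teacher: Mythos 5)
Your overall scheme --- package $\MH^{k}_{*}(-;\bZ)$ as a module over a contraction category of bounded-genus graphs, invoke quasi-Gr\"obnerness to get Noetherianity over $\bZ$, pass to the torsion subfunctor, and take the least common multiple of the exponents at finitely many generating graphs --- is exactly the deduction the paper performs, and your final step is sound (induced maps are group homomorphisms, each $\MH^{k}_{*}(\tG_i;\bZ)$ is a finitely generated abelian group since only finitely many lengths $l$ contribute in a fixed degree $k$). But the step you defer as ``the technical heart'', namely that $\MH^{k}_{*}(-;\bZ)$ is a finitely generated module, is the actual mathematical content of the theorem, and you give no argument for it; as it stands the proposal reduces the theorem to an unproved claim. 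The paper establishes it by introducing the vertex module $\cV$ and showing: (i) $\MH^{k}_{*}(\tG;R)$ is a subquotient of $\Hom(\cV^{\oplus k}(\tG),R)$, where dualizing a subquotient is legitimate only because the modules $R_k(\tG;R)$ and $Q_k(\tG;R)$ are free, so the dualized sequence remains exact (Proposition~\ref{lem:MHsubq}); and (ii) $\Hom(\cV^{\oplus k},R)$ is a finitely generated $\bC\Graph^{\op}_{\leq g}$-module, generated by the roses $\tR(m)$ and the eyeglass graphs $\tS(m_1,m_2,m_3)$, via an explicit induction that produces each indicator function $\delta_v$ from the functions $f_e$ attached to contractions of a spanning tree (Theorem~\ref{thm:Vkfg}). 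Note that the covariant vertex module itself is \emph{not} finitely generated over the contraction category, so the passage to cohomology (equivalently, to the opposite category) is forced by finite generation, not merely by variance bookkeeping, and Corollary~\ref{cor:mgfg} is what your outline needs but does not supply.

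A second, smaller but genuine, misstep is the choice of category: the category of planar marked graphs from the end of the abstract is a complementary result of the paper and plays no role in this theorem. Graphs of (circuit-rank) genus at most $g$ need not be planar --- $K_{3,3}$ has genus $4$ --- so a module over planar graphs cannot see all graphs in the statement, and markings are irrelevant here since only contractions are used. The proof runs over $\bC\Graph^{\op}_{\leq g}$, whose quasi-Gr\"obner property is quoted from earlier work (Proposition~\ref{prop:contractcatfg}); with that category in place and the finite generation of Corollary~\ref{cor:mgfg} supplied, your torsion argument goes through verbatim.
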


Roughly speaking, the theorem says that the  order of torsion classes in integral magnitude cohomology of connected graphs of genus at most $g$, in a fixed cohomological degree~$k$, is bounded. 
In fact, to apply the ideas of Ramos, Miyata, and Proudfoot to our context, we consider the category $\bC\Graph_{{\leq}g}$ of graphs  with genus at most~$g$ ({see} Remark~\ref{rem:gskghksh}) and contractions. The opposite category of this category is quasi-Gr\"obner in the sense of~\cite{Grobner_Cat}, as proved in~\cite[Theorem~1.1]{contractioncat}.
Let $\cV$ be the ${\bf CGraph}_{\leq g}$-module which associates to a graph $\tG$ the free $R$-module generated by the vertices of $\tG$; we call it the \emph{vertex module} (see Definition~\ref{def:vertexmodule}). Then, in the spirit of~\cite{  minor_twocolumns, contractioncat,topological_minor_thm}, the main technical result of the paper is the following -- cf.~Theorem~\ref{thm:Vkfg}:

\begin{theorem*}\label{thm:tensor_fg} 
For any $k\geq 1$  natural number, the ${\bf CGraph}^{\op}_{\leq g}$-module  ${\rm Hom}_R(\mathcal{V}^{\otimes k},R)$ is finitely generated. 
\end{theorem*} 

Using Theorem~\ref{thm:tensor_fg}, we prove that magnitude cohomology is a finitely generated functor on the category $\bC\Graph^\op_{{\leq}g}$, and the technology developed in~\cite{Grobner_Cat} allows us to infer Theorem~\ref{thm:main}. As a byproduct, we also obtain an estimate  of the growth of the ranks of magnitude cohomology groups -- \emph{cf.}~Corollary~\ref{cor:jafbka};

\begin{theorem*}\label{thm:growth_intro}
    Let $\bK$ be a field, {fix a cohomological grading $k$} and {genus} $g\geq 0$. Then, there exists a polynomial $f\in \bZ[t]$ of degree at most~$g+k+1$, such that, for all {graphs} $\tG$ of genus at most~$ g$, we have
    \[
    \dim_\bK \MH_*^k(\tG;\bK) \leq   f(\# E(\tG)) \ ,
\]
where $\# E(\tG)$ is the number of edges of $\tG$.
\end{theorem*}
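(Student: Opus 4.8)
The plan is to deduce the rank bound from the finite generation of magnitude cohomology as a functor on $\bC\Graph_g^\op$, exactly as the first theorem is deduced. The starting point is the statement (asserted above) that $\MH^k_*(-;\bK)$ is a finitely generated functor over the category $\bC\Graph_g^\op$, together with the fact --- proved in~\cite{contractioncat} --- that $\bC\Graph_g$ is quasi-Gr\"obner, so that the Noetherianity machinery of Sam--Snowden~\cite{Grobner_Cat} applies. Over such a category, a finitely generated module has a well-controlled Hilbert function; the job is to translate ``well-controlled'' into the explicit polynomial bound of degree $\leq g+1$ in the number of edges.

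First I would fix $k$ and $g$ and choose a finite set of generators of $\MH^k_*(-;\bK)$ as a $\bC\Graph_g^\op$-module: finitely many graphs $\tG_1,\dots,\tG_r$ of genus $\leq g$, and classes in $\MH^k_*(\tG_i;\bK)$, such that every class in $\MH^k_*(\tG;\bK)$ is a $\bK$-linear combination of images of these under morphisms $\tG\to\tG_i$ in $\bC\Graph_g$ (edge contractions, in the notation of~\cite{contractioncat}). This immediately gives the crude estimate
\[
\dim_\bK \MH^k_*(\tG;\bK) \ \leq\ \sum_{i=1}^r \big(\dim_\bK \MH^k_*(\tG_i;\bK)\big)\cdot \#\Hom_{\bC\Graph_g}(\tG,\tG_i)\ .
\]
So the problem reduces to bounding, for each fixed target $\tG_i$, the number of morphisms $\tG\to\tG_i$ in terms of $\#E(\tG)$. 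A morphism in a contraction category is determined by choosing which edges of $\tG$ to contract (together with the extra combinatorial data of the marking/ordering, which is bounded independently of $\tG$ once the target is fixed), and a genus-$g$ graph that contracts onto $\tG_i$ has a bounded number of edges \emph{beyond} those of $\tG_i$ --- essentially because contraction does not increase genus and the ``excess'' is controlled by $g$. Hence the number of such morphisms is at most polynomial in $\#E(\tG)$, and one reads off that the degree of this polynomial is governed by how many independent edge-contraction choices can be made, which is where the $g+1$ enters (roughly, $g$ independent cycles plus one more parameter). Summing over $i=1,\dots,r$ and absorbing the constants $\dim_\bK\MH^k_*(\tG_i;\bK)$ yields a single polynomial $f\in\bZ[t]$ of degree $\leq g+1$ with $\dim_\bK\MH^k_*(\tG;\bK)\leq f(\#E(\tG))$.

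The main obstacle, I expect, is pinning down the exact degree $g+1$ rather than merely ``some polynomial bound.'' The soft Noetherianity argument gives polynomial growth of Hilbert functions over a quasi-Gr\"obner category, but the precise degree requires understanding the ``size'' of the generating objects relative to a source graph $\tG$ --- concretely, bounding the number of edges of $\tG$ that get contracted to reach a fixed $\tG_i$, uniformly in $\tG$, by a function of $g$. This is a statement about the combinatorics of the contraction poset on genus-$\leq g$ graphs, and it is presumably the content (or a consequence) of the structural results of~\cite{contractioncat} that are being invoked; I would cite those rather than reprove them. A secondary point to check is that passing between homology and cohomology, and between the field $\bK$ and $\bZ$, does not affect the bound: the short exact sequence relating $\MH$ and its cohomological dual~\cite[Remark~2.5]{MagnitudeCohomology} shows $\dim_\bK\MH^k_l(\tG;\bK)$ and the rank of $\MH_k^l(\tG;\bZ)$ differ only by torsion contributions already controlled by the first theorem, so the same $f$ works (up to harmless constants) for both. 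Finally, I would note that once the bound holds for each length $l$ it holds for the sum over $l$, since for fixed $k$ and fixed graph only finitely many lengths $l$ contribute and --- again by finite generation --- this finite range is itself bounded independently of $\tG$.
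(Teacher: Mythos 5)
Your overall strategy (finite generation over $\bC\Graph_{\leq g}^\op$ plus a count of morphisms into the generators) gives a polynomial bound, but it does not give the degree bound $g+1$, and the step where you try to extract that degree is where the argument breaks. In your crude estimate the degree of the resulting polynomial is $\max_i \#E(\tG_i)$: the number of contractions $\tG\to\tG_i$ is controlled by the choice of which $\#E(\tG_i)$ edges of $\tG$ survive, so it grows like $\#E(\tG)^{\#E(\tG_i)}$. Since your generators $\tG_1,\dots,\tG_r$ come from the abstract Noetherianity machinery applied to $\MH^k_*(-;\bK)$ itself, you have no control whatsoever on their number of edges, hence no control on the degree. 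Moreover, the claim you use to argue the count is small --- that a genus-$g$ graph contracting onto a fixed $\tG_i$ has boundedly many edges beyond those of $\tG_i$ --- is false: arbitrarily long subdivisions of a fixed graph (genus unchanged) contract onto it, so the number of contracted edges is unbounded; and the heuristic ``$g$ independent cycles plus one parameter'' is not where $g+1$ actually comes from.

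The paper's proof avoids this by never using generators of $\MH^k$ itself. Instead it uses that $\MH^k_*(-;R)$ is a subquotient of the explicit module $\Hom(\mathcal{V}^{\oplus k},R)$ (Proposition~\ref{lem:MHsubq}), whose generators are exhibited concretely in Theorem~\ref{thm:Vkfg}: the roses $\tR(m)$ and the graphs $\tS(m_1,m_2,m_3)$, each with at most $g+1$ edges. Since pointwise dimensions of subquotients are bounded by those of the ambient module, the degree bound is inherited, and \cite[Proposition~4.3]{contractioncat} converts ``subquotient of a module finitely generated in degree $\leq g+1$'' (i.e.\ by graphs with at most $g+1$ edges) into the polynomial bound of degree $\leq g+1$ in $\#E(\tG)$. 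So the missing ingredient in your proposal is precisely an explicit bound on the size of the generating objects; to repair it you would have to replace your abstract generators by the ambient module $\Hom(\mathcal{V}^{\oplus k},R)$ and its explicit small generators, which is the paper's route. (Your closing remark that only a uniformly bounded range of lengths $l$ contributes is also not justified as stated; in the paper this issue does not arise because the subquotient bound is applied to $\bigoplus_l\MH^k_l$ all at once.)
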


This last result says that the growth of the ranks of magnitude cohomology, in a fixed cohomological degree~$k$, is at most polynomial, provided {we} restrict to graphs of bounded genus -- \emph{cf.}~Corollary~\ref{cor:hajfgak}. 
We remark here that the results are consistent with previous computations -- see also Example~\ref{ex:cycle}, and the text thereafter -- for example,
for cycle graphs~\cite{magnitue_discretemorsetheory}. An immediate question is whether the polynomial nature of these estimates also holds for the category of graphs. 
Unfortunately, at the moment, it is not known if the category of graphs $\bC\Graph^\op$ is a quasi-Gr\"{o}bner category. Therefore our result does not extend 
to the whole category of graphs, and an analogue of Theorem~\ref{thm:growth_intro} may or may not hold.
We also remark here
that there are examples of cohomology theories of (directed) graphs whose rank grows exponentially in the number of vertices -- see, for example, the growth rate of multipath cohomology~\cite{primo} with coefficients in an algebra, see~\cite[Table~2]{secondo}.

It was recently shown by Asao~\cite{asao} that magnitude homology is related to another homology theory of  directed graphs, the so-called \emph{path homology}~\cite{Grigoryan_first}. As a corollary of Asao's work, we can directly infer similar results to those above for path (co)homology of {undirected} graphs. For example, we get the following -- see Corollary~\ref{cor:torionPH}.

\begin{theorem*}
For each $g,k$ positive integers, there exists a{n integer} $d = d({g,k}) \in \bZ$ such that, for each graph $\tG$ of genus $g$, the torsion part of the path cohomology $\mathrm{PH}^k(\tG,\bZ)$ 
has exponent at most $d$.
\end{theorem*}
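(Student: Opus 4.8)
The plan is to deduce this statement from the analogous result for magnitude cohomology (the first displayed theorem, established as Theorem~\ref{thm:torsionmagnitude}) via the comparison between magnitude homology and path homology due to Asao~\cite{asao}. The key input is that Asao's work relates the path (co)homology of an undirected graph $\tG$ to a specific summand (or associated graded piece) of its magnitude (co)homology; concretely, path cohomology $\mathrm{PH}^k(\tG;\bZ)$ arises as a subquotient — in fact, on the diagonal $l = k$, a direct summand up to a spectral-sequence identification — of $\MH^k_*(\tG;\bZ)$. I would first recall this identification precisely, citing the relevant statement from~\cite{asao}, and note that a subquotient (or direct summand) of an abelian group whose torsion is annihilated by $m$ has its torsion annihilated by $m$ as well.

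First I would fix $g, k \geq 0$ and invoke Theorem~\ref{thm:torsionmagnitude} to obtain an integer $m = m(g,k)$ annihilating the torsion of $\MH^k_*(\tG;\bZ)$ for every graph $\tG$ of genus at most $g$ (note genus exactly $g$ is a special case of genus at most $g$, so no loss). Second, I would apply Asao's comparison to express $\mathrm{PH}^k(\tG;\bZ)$ as a subquotient of $\MH^k_*(\tG;\bZ)$: if the relation is via a spectral sequence converging from magnitude homology to path homology (or its dual in cohomology), then every page, and in particular the $E_\infty$ page computing path cohomology, is built from subquotients of $\MH^k_*(\tG;\bZ)$, hence has torsion annihilated by $m$. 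Taking $d = d(g,k) := m$ then finishes the argument, since the torsion of $\mathrm{PH}^k(\tG;\bZ)$ embeds into (a quotient of a subgroup of) something killed by $m$.

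The main obstacle — really the only non-formal point — is pinning down the exact shape of Asao's comparison and checking that it behaves well \emph{integrally}, i.e.\ with $\bZ$ coefficients rather than over a field. Many such magnitude-to-path-homology statements are phrased over a field or rely on a spectral sequence whose differentials are only controlled up to torsion; one must verify that the relevant piece of path cohomology is genuinely a subquotient of integral magnitude cohomology, so that the torsion bound transfers. If Asao's result is stated as an isomorphism onto a direct summand in the relevant degree, the transfer is immediate; if it is stated as a spectral sequence, one uses that on each page the groups are subquotients of the previous, and the exponent of the torsion can only decrease (or stay the same) under passing to subquotients, so the bound $m$ from magnitude cohomology still controls the $E_\infty$ page, and hence $\mathrm{PH}^k(\tG;\bZ)$. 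Either way, once the integral subquotient relationship is in hand, the conclusion is immediate with $d = m(g,k)$.
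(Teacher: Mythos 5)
There is a genuine gap, and it sits exactly at the step you flag as ``formal'': the claim that a bound on the exponent of torsion passes to subquotients is false. A subgroup or a direct summand of a group whose torsion is killed by $m$ indeed has torsion killed by $m$, but a quotient need not: $\bZ$ is torsion-free, yet $\bZ/n\bZ$ is a quotient with torsion of arbitrarily large exponent. Asao's comparison does not give path cohomology as a subgroup or summand of $\MH^k_*(\tG;\bZ)$; as recalled in Remark~\ref{rem:pathhom}, path (co)homology is the (co)homology of a complex whose terms are magnitude (co)homology groups, i.e.\ it sits on the diagonal of the second page of a spectral sequence whose first page is magnitude (co)homology. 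Passing from one page to the next takes $\ker/\mathrm{im}$, and the image need not be saturated, so new torsion of uncontrolled exponent can appear even if the first page has torsion bounded by $m(g,k)$ (indeed even if it is torsion-free). Hence taking $d=m(g,k)$ and quoting Theorem~\ref{thm:torsionmagnitude} does not suffice: the numerical bound does not transfer through the spectral sequence.

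What does transfer through subquotients is finite generation of the functor, and this is how the paper argues. The path cochain/cohomology groups are subquotients of the $\bC\Graph^{\op}_{\leq g}$-module $\Hom(\mathcal{V}^{\oplus k},\bZ)$ (via the spectral sequence whose $0$-th page consists of magnitude cochain groups, themselves subquotients of this module by Proposition~\ref{lem:MHsubq}); since $\Hom(\mathcal{V}^{\oplus k},\bZ)$ is finitely generated (Theorem~\ref{thm:Vkfg}) and $\bC\Graph^{\op}_{\leq g}$ is quasi-Gr\"obner (Proposition~\ref{prop:contractcatfg}), Noetherianity of the representation category (Theorem~\ref{rem:qGrobner is Noeth}) makes the path cohomology functor finitely generated. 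One then reruns the argument of Theorem~\ref{thm:torsionmagnitude}: finitely many generating graphs, each with finitely generated cohomology, and $d(g,k)$ is the least common multiple of the corresponding annihilators --- a bound which need not equal $m(g,k)$. To repair your proposal you must replace ``transfer the torsion bound through the subquotient'' by ``transfer finite generation through the subquotient and re-derive a bound,'' which is a genuinely different (and necessary) step rather than a routine verification.
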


The main ingredient in the proof of the  theorems is that magnitude cohomology is a finitely generated $\bC\Graph_{{\leq}g}^\op$-module, in the sense of representation theory of categories -- \emph{cf.}~Corollary~\ref{cor:mgfg}. 
It would be interesting to see whether, also in the context of directed graphs, the usual homology theories considered in the literature are finitely generated.

\subsection*{Conventions}

All graphs are assumed to be finite, connected and undirected, and are denoted in typewriter font, e.g.~$\tG$. Unless otherwise specified, $R$ will denote a Noetherian commutative ring with identity.
By $\bN$ we will denote the set of non-negative integers.

\subsection*{Acknowledgements}

The authors wish to thank Daniele Celoria, for the helpful discussions. 
The authors are also most grateful to Richard Hepworth and Yasuhiko Asao, for their valuable comments and feedback on the first draft of the paper.  LC  warmly thanks also Anna Pidnebesna and Isa Dallmer-Zerbe.
The authors are grateful to INdAM-GNSAGA. LC~was partially supported by the Starting Grant 101077154 "Definable Algebraic Topology" from the European Research Council. CC~was supported by the MIUR-PRIN project 2017JZ2SW5 and acknowledges the MIUR Excellence Department Project awarded to the Department of Mathematics, University of Pisa, CUP I57G22000700001. 
During the late stages of the writing of this paper, CC was partially supported by INdAM ``mensilità estero'' grant, and wishes to thank Maciej Borodzik and IMPAN for the hospitality.
The authors are thankful to the referees for their comments, which led to an improvement of the paper. We are especially thankful to an anonymous referee who pointed out a mistake in an earlier version of the paper. 

\section{Basic notions}

In this section, we recall some basic notions needed in the follow-up.

\subsection{Finitely generated $\bC$-modules}

We start with some categorical notions as introduced in~\cite{FI-modules} in the context of FI-modules. We  recall the notion of Noetherian modules and  categories, following the general framework developed in~\cite{Grobner_Cat}.

Let $\bC$ be a (essentially) small 
category, and $R\neq 0$ be a commutative unital ring. 
A \emph{representation} of the category~$\bC$, or a \emph{${\bf C}$-module over~$R$}, is a functor $\mathcal{M}\colon\mathbf{C} \to R\text{-{\bf Mod}}$ with values in the category of (left) $R$-modules. A map of $\bC$-modules is a natural transformation of functors. Denote by ${\Rep}_R(\mathbf{C})$ the resulting  category of $\bC$-modules over~$R$. The category~$\Rep_R(\mathbf{C})$ is an Abelian category, and all the classical notions as submodules, kernels, cokernels, injections, surjections, can be defined pointwise. 
For example,~a \emph{submodule} of a $\bC$-module $\mathcal{M}$ is a $\bC$-module $\mathcal{N}\colon \bC\to R\text{-{\bf Mod}}$ such that $\mathcal{N}(c)$ is a submodule of~$\mathcal{M}(c)$, for each object~$c$ of $\bC$, and such that the inclusion maps define a natural transformation. A submodule~$\mathcal{N}$ is called \emph{proper}, if $\mathcal{N}(c)$ is a proper submodule of $\mathcal{M}(c)$ for at least one $c$. Analogously for kernels and cokernels.

For a given $\bC$-module~$\cM$, by an \emph{element} of $\cM$ we mean an element of $\cM(c)$ for some object~$c$ of $\bC$. If $S$ is a subset of the disjoint union $\bigoplus_{c\in \bC}\cM(c)$, the \emph{span}~$\mathrm{span}(S)$ of~$S$ is the minimal $\bC$-submodule of~$\cM$ containing each element of~$S$. We are primarily interested in finitely generated modules, which can be defined as follows:

\begin{defn}\label{def:fgmod}
A $\mathbf{C}$-module $\mathcal{M}$ is \emph{finitely generated} if there exists a finite set of elements 
$
m_1,..., m_k \in \bigoplus_{c\in \bC} \mathcal{M}(c) 
$,
such that $\mathrm{span}(m_1,\dots,m_k)=\cM$. \end{defn}

We can also characterise finitely generated modules in terms of simpler modules.
For each object $c$ of $\bC$,  define a \emph{principal projective} $\bC$-module~$\cP_c$, as follows. The functor  $\cP_c$ is defined on an object~$c'$ of $\bC$ by setting
\[
\cP_c(c')\coloneqq R\, \langle\Hom_{\bC}(c,c')\rangle \ ,
\]
\emph{i.e.}~the free  $R$-module with basis~$\Hom_{\bC}(c,c')$; $\cP_c$ is then extended to morphisms accordingly, by compositions. For a morphism $\gamma\colon c \to c'$, we denote by $e_\gamma$ the corresponding element in $\cP_c(c')$. 
Observe that, for any $\bC$-module~$\cM$, we have a natural isomorphism 
\[  \Hom_{{\Rep}_R(\mathbf{C})}(\cP_c,\cM)\overset{\cong}{\longrightarrow} \cM(c) \ ,\]
which is given by $\phi \mapsto \phi(c)(e_{{\rm id}_c})$. Therefore, for each short exact sequence of $\bC$-modules
\[ 0\to \cM_1 \overset{f}{\to} \cM_2\overset{g}{\to} \cM_3\to 0 \] 
application of the functor $\Hom_{{\Rep}_R(\mathbf{C})}(\cP_c,-)\colon {\Rep}_R(\mathbf{C}) \to R\text{-{\bf Mod}}$ yields a sequence which is naturally equivalent to 
\[ 0\to \cM_1(c)\overset{f(c)}{\to}  \cM_2(c)\overset{g(c)}{\to} \cM_3(c)\to 0\]
and hence exact. It follows that the module~$\cP_c$ is a projective object in~${\Rep}_R(\mathbf{C})$ -- which justifies the name principal projective.  Finitely generated $\bC$-modules can be characterised in terms of principal projectives,  \emph{cf.}~\cite[Proposition~2.3]{FI_Noetherian}:

\begin{lemma}\label{lem:fingengen}
A $\bC$-module $\cM$ is finitely generated if and only if there exists a surjection
    \[
\bigoplus_{i=1}^n \cP_{c_i} \to \cM
    \]
for some objects $c_1,\dots,c_n$ of $\bC$.
\end{lemma}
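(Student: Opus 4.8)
The plan is to leverage the natural isomorphism $\Hom_{\Rep_R(\bC)}(\cP_c,\cM)\cong\cM(c)$ recorded just above the statement: it sends a map $\phi\colon\cP_c\to\cM$ to $\phi_c(e_{\id_c})\in\cM(c)$, and conversely sends $m\in\cM(c)$ to the map $\phi^m$ defined objectwise by $\phi^m_{c'}(e_\gamma)=\cM(\gamma)(m)$ for $\gamma\in\Hom_\bC(c,c')$, extended $R$-linearly (naturality is a direct check using functoriality of $\cM$). The one auxiliary fact I would isolate first is that $\mathrm{im}(\phi^m)=\mathrm{span}(m)$. Indeed, $\mathrm{im}(\phi^m)$ is a submodule of $\cM$ containing $m=\phi^m_c(e_{\id_c})$, hence contains $\mathrm{span}(m)$; conversely, $\mathrm{span}(m)$ must contain $\cM(\gamma)(m)=\phi^m_{c'}(e_\gamma)$ for every $\gamma\colon c\to c'$ and, being pointwise an $R$-submodule, it contains the $R$-span of these elements, which is exactly $\mathrm{im}(\phi^m)(c')$.

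For the ``only if'' direction, suppose $\cM=\mathrm{span}(m_1,\dots,m_n)$ with $m_i\in\cM(c_i)$. Assemble the maps $\phi^{m_i}\colon\cP_{c_i}\to\cM$ into a single morphism $\Phi\colon\bigoplus_{i=1}^n\cP_{c_i}\to\cM$. By the auxiliary fact, $\mathrm{im}(\Phi)$ is a submodule of $\cM$ whose value at every object $c'$ equals $\sum_i\mathrm{im}(\phi^{m_i})(c')$ and which contains each $m_i$; therefore $\mathrm{im}(\Phi)\supseteq\mathrm{span}(m_1,\dots,m_n)=\cM$, so $\Phi$ is the desired surjection.

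For the ``if'' direction, I would first observe that the principal projective $\cP_c$ is generated by the single element $e_{\id_c}$: the minimal submodule containing $e_{\id_c}$ must contain $\cP_c(\gamma)(e_{\id_c})=e_\gamma$ for all $\gamma\colon c\to c'$, and being pointwise an $R$-submodule it must contain their $R$-span, which is all of $\cP_c(c')$. Hence $\bigoplus_{i=1}^n\cP_{c_i}$ is generated by the finite set $\{e_{\id_{c_i}}\}_{i=1}^n$. I would then use the elementary remark that a surjection of $\bC$-modules carries a generating set to a generating set: if $\pi\colon\cN\twoheadrightarrow\cM$ and $\cN=\mathrm{span}(S)$, then $\pi^{-1}(\mathrm{span}(\pi(S)))$ is a submodule of $\cN$ containing $S$, hence equal to $\cN$, and surjectivity of $\pi$ forces $\mathrm{span}(\pi(S))=\cM$. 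Applying this to a given surjection $\bigoplus_{i=1}^n\cP_{c_i}\twoheadrightarrow\cM$ shows $\cM$ is generated by the images of the $e_{\id_{c_i}}$, so $\cM$ is finitely generated.

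The argument is entirely formal, and I do not expect a genuine obstacle; the only point requiring care is the compatibility between the abstract definition of $\mathrm{span}$ as the minimal subfunctor containing a set and the concrete description as the pointwise $R$-span of the $\bC$-orbit of the generators. I would state and verify this compatibility once, at the outset, so that both implications then follow by reading it off together with the Yoneda-type identification $\Hom_{\Rep_R(\bC)}(\cP_c,\cM)\cong\cM(c)$.
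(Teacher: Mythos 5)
Your argument is correct: the compatibility $\mathrm{span}(m)=\mathrm{im}(\phi^m)$ under the identification $\Hom_{\Rep_R(\bC)}(\cP_c,\cM)\cong\cM(c)$ is exactly the point, and both directions then follow as you write. The paper itself gives no proof of this lemma (it is cited as well known from the FI-modules literature), and your proof is the standard one that the cited reference uses, so there is nothing to flag beyond the care you already took in checking that the abstract span coincides with the pointwise $R$-span of the orbit of the generators.
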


For a finitely generated $\bC$-module $\cM$, we will also refer to the objects  $c_1,\dots,c_n$ of $\bC$ in the lemma as \emph{generators} of $\cM$.
We now recall the notion of Noetherianity in the context of modules over an arbitrary category, which is central for this work.

\begin{defn}
    A $\mathbf{C}$-module  is \emph{Noetherian} if all its submodules are finitely generated. The category~${\Rep}_R(\mathbf{C})$ is Noetherian if all its finitely generated $\mathbf{C}$-modules  are Noetherian. 
\end{defn}

Observe that, in discussing properties related to finite generation, it is often possible to restrict to principal projective modules. Indeed, by \cite[Proposition~3.1.1]{Grobner_Cat}, the category~${\Rep}_R(\mathbf{C})$ is Noetherian if and only if every principal projective module $\cP_c$ is Noetherian.

\begin{example}\label{ex:FI}
Denote by $\mathbf{FI}$  the category of finite sets and injective functions. Then, the category~${\Rep}_R(\mathbf{FI})$ is Noetherian by~\cite[Theorem A]{FI_Noetherian} for any Noetherian ring~$R$.
\end{example}

The category $\mathbf{FI}$ is an example of an $EI$-category, \emph{i.e.}~a category in which every endomorphism is an isomorphism. It was proven  that~${\Rep}_R(\mathbf{C})$ is Noetherian, for any finite $EI$-category $\mathbf{C}$ and Noetherian ring~$R$~\cite[Lemma~16.10]{MR1027600}. This result was further extended to infinite $EI$-categories (satisfying some mild combinatorial conditions) in~\cite[Theorem~3.7]{MR3358549}.

\begin{example}
Let $\mathbf{FA}$ be the category  of finite sets, and all functions. Then, the category~${\Rep}_R(\mathbf{FA})$ is Noetherian~\cite[Corollary~7.3.5]{Grobner_Cat} for any  Noetherian commutative unital ring $R$.
\end{example}

Following the seminal paper \cite{FI-modules}, Noetherian properties of various other categories have been extensively investigated, in particular thanks to the techniques developed by Sam and Snowden in \cite{Grobner_Cat}. One of the main results in the latter paper is that, for a given category~$\bC$ (with some combinatorial assumptions), and a (possibly non-commutative) left Noetherian ring $R$, the associated category of representations is Noetherian as well.

\subsection{Graph categories}

The aim of this section is to discuss the combinatorial properties of certain categories of graphs, from the viewpoint of~\cite{contractioncat}.

In the following, by a graph~$\tG$ we mean a finite, connected and non-empty $1$-dimensional CW-complex; it has a set of vertices~$V(\tG)$ and a set of edges~$E(\tG)$, which are unordered pairs of vertices, possibly with multiplicities. 
We recall the intuitive notions of contractions, deletions and minor morphisms; for a more detailed account of these operations, we refer to~\cite{minor_twocolumns,topological_minor_thm}.

Let $\tG$ be a graph and $e\in E(\tG)$ be an edge. The \emph{contraction} of~$\tG$ with respect to the edge $e$, is the graph $\tG/e$ obtained from $\tG$ by contracting $e$ to a point.
The \emph{deletion} of $e$ is the graph $\tG\setminus e$ obtained from $\tG$ by removing $e$ from the set of edges of $\tG$. 
Note that the operation of contracting edges does not change the homotopy type of $\tG$, unless the edge contracted is a self-loop.
We are not going to consider this latter case, and only allow contractions of edges with distinct endpoints. Similarly, when dealing with deletions, we will only allow deletions of graphs for which $\tG\setminus e$ is connected.
A \emph{minor} of a graph $\tG'$ is a graph~$\tG$ that is isomorphic to a graph obtained from~$\tG'$  by iterative contractions and deletions. 
More formally, we have the following definition of minor morphism of graphs.

\begin{defn}[{\cite[Definition~2.1]{minor_twocolumns}}]
A \emph{minor morphism} $\phi\colon \tG'\to\tG$ is a map of sets
\[
\phi\colon V(\tG')\sqcup E(\tG')\sqcup \{\star\}\to V(\tG)\sqcup E(\tG)\sqcup \{\star\} \ ,
\]
such that:
\begin{itemize}
\item $\phi(V(\tG'))=V(\tG)$ and $\phi(\star)=\star$;
\item if an edge $e \in E(\tG')$ has endpoints $\{v, w\}$,  
and $\phi(e)\neq \star$,
then either $\phi(e) = \phi(v) = \phi(w)$ is a vertex of $\tG$,
or $\phi(e)$ is an edge of $\tG$ with endpoints $\phi(v) $ and $ \phi(w)$;
\item $\phi$ maps  $\phi^{-1}(E(\tG))$ bijectively onto $E(\tG)$;
\item for each vertex $v\in \tG$, the preimage $\phi^{-1}(v)$ (as a subgraph of $\tG'$) is a tree.
    \end{itemize}
\end{defn}

The preimage of $\star$ under $\phi$ consists of deleted edges, whereas the edges that are mapped to vertices of $\tG$ represent the contracted ones. 
Furthermore, the last item in the definition implies that self-loops cannot be contracted, but only deleted. 

In the follow-up, we will mainly consider (subcategories of) the category~$\Graph$ 
of finite non-empty   graphs, with minor morphisms of  graphs. For example,
we consider the subcategory $\bC\Graph$ of $\Graph$ consisting of finite, non-empty,  connected,   graphs, where the morphisms are given by contractions.

For a graph $\tG$, we call the (combinatorial) \emph{genus} of~$\tG$ the quantity $ |E(\tG)|-|V(\tG)|+1$. This is often called \emph{circuit rank} but, for consistency, here we prefer to follow the terminology adopted in~\cite{contractioncat}.

\begin{remark}\label{rem:gskghksh}
Consider the full subcategory $\bC\Graph_g$ of $\bC\Graph$ given by graphs of genus $g$. In particular,     as contractions do not change the genus of a graph,   $\bC\Graph$ can be seen as the disjoint union of the categories $\bC\Graph_g$, for $g\in \bN$. We will also denote by $\bC\Graph_{\leq g}$ the subcategory of $\bC\Graph$ spanned by graphs of genus $\leq g$. The category $\bC\Graph_g$ was denoted by $\mathcal{G}_g$ in \cite{contractioncat}. 
\end{remark}

We have the following key result:

\begin{theorem}[{\cite[Theorem~1.1]{contractioncat}}]
\label{cor:contractcatfg}
For any $g\geq 0$, the representation category of ${\bf CGraph}^{\op}_{\leq g}$ is Noetherian over any Noetherian ring. 
\end{theorem}

Thus, for any commutative unital Noetherian ring $R$, all submodules of finitely generated ${\bf CGraph}^{\op}_{\leq g}$ modules 
are finitely generated.

\section{Magnitude (co)homology }

In this section, we prove the main result of the paper, that is, that magnitude cohomology of graphs is a finitely generated functor on the  category $\bC\Graph^\op_{\leq g}$ of connected graphs with bounded genus, and contractions. As a consequence, in the next section, we shall recover structural results on its torsion and rank growth. 
 
\subsection{Magnitude homology and  cohomology of graphs}
We start with recalling the definition of magnitude homology of graphs, and we will then focus on magnitude cohomology. We will mainly follow~\cite{richardHHA, asao,  MagnitudeCohomology}.

First, observe that a connected graph can be seen as a metric space with the path metric -- the distance between two vertices of the graph being given by the length of a shortest path in the graph connecting them. 
To be more precise, the points of the metric space associated to a graph $\tG$ are its vertices, and edges are declared to have length~$1$. Concretely, the metric~$d_{\tG}$ on $\tG$ is  given by
\begin{align*}
    d_{\tG}(v,w)\coloneqq \min  \{ d_{\tG}(v,v_1) + &\sum_{i=1}^{k-2} d_{\tG}(v_i,v_{i+1}) + d_{\tG}(v_{k-1},w) \mid \\
    & \{v,v_1\}, \{v_{k-1},w\}, \{v_i,v_{i+1}\}\in E(\tG), i=1,\dots, k-2 \}
\end{align*}
for $v,w$ vertices of $\tG$. If the graph is not connected, we set $d_{\tG}(v,w)\coloneqq \infty$ for every $v,w\in \tG$ not connected by any path; hence, for non-connected graphs, $d_{\tG}$ is an extended metric. For a $(k+1)$-tuple$(v_0,\dots,v_k)$ of vertices of $\tG$, with $v_i\neq v_{i+1}$ and $d_{\tG}(v_i,v_{i+1})<\infty$ for each~$i$, the \emph{length} of $(v_0,\dots,v_k)$ in $\tG$ is the number 
\[
\ell(v_0,\dots,v_k)\coloneqq \sum_{i=0}^{k-1}d_{\tG}(v_i,v_{i+1}) \ .
\]
We  recall the definition of magnitude chain groups. Let $R$ be a commutative unital ring.

\begin{defn}\label{def:magnitude}
    Let $\tG$ be a graph, and $l,k\in \bN$ natural numbers. We let
    \[
    \MC_{k,l}(\tG;R)\coloneqq R\langle (v_0,\dots,v_k) \mid v_0\neq \dots\neq v_k, \ell(v_0,\dots,v_k)=l \rangle 
    \]
    be the free $R$-module on the paths of length $l$ on $k+1$ vertices  of $\tG$. 
    The differential
    $
    \partial\colon\MC_{k,l}(\tG;R)\to\MC_{k-1,l}(\tG;R)
    $
    is defined on $(k+1)$-uples $(v_0,\dots,v_k)$ by
    \[
    \partial(v_0,\dots,v_k)\coloneqq \sum_{i=1}^{k-1} (-1)^i\partial_i(v_0,\dots,v_k) \ ,
    \]
    where $\partial_i(v_0,\dots,v_k)=(v_0,\dots,v_{i-1},v_{i+1}, \dots, v_k)$ assuming that $\ell(v_0,\dots,v_k)=l=\ell(v_0,\dots,v_{i-1},v_{i+1}, \dots, v_k)$, and $\partial_i$ is set to be $0$ otherwise. 
\end{defn}

For a given $l\in \bN$, the pair $(\MC_{*,l}(\tG;R), \partial)$ is a chain complex by \cite[Lemma~2.11]{richardHHA}.

\begin{defn}[{\cite[Definition~2.4]{richardHHA}}]
The \emph{magnitude homology}~$\MH_{*,*}(\tG;R)$ of $\tG$ is defined as the bigraded $R$-module $\bigoplus_{k,l\geq 0} \MH_{k,l}(\tG;R)$, where
    \[
    \MH_{k,l}(\tG;R)\coloneqq \H_k(\MC_{*,l}(\tG;R), \partial)
    \]
    is given by the homology of the magnitude chain complex.
\end{defn}

We can reinterpret the definition of magnitude homology groups as follows -- see also \cite[Section~2]{asao} for the case of directed graphs.
Given a non negative integer $k$, define $\Lambda_k(\tG;R)$ as the $R$-module freely generated by the $(k+1)$-uples of vertices of $\tG$, that is
$$\Lambda_k(\tG;R)\coloneqq R\langle (v_0,\dots, v_k)\mid v_i\in V(\tG)\rangle\ .$$
Consider the submodule 
$
I_k(\tG;R)\coloneqq R\langle (v_0,\dots, v_k)\mid v_i=v_{i+1} \text{ for some } i \rangle
$
of $\Lambda_k(\tG;R)$, where $I_0(\tG)$ is set to $ 0$.   Note that these modules can be equipped with a differential~${\rm d}$, defined analogously to $\partial$, 
which makes them
chain complexes. Since~$I_k(\tG;R)\subseteq \Lambda_k(\tG;R)$, we can form the quotient chain complex with modules $R_k(\tG;R)\coloneqq \Lambda_k(\tG;R)/ I_k(\tG;R)$. Then, confining ourselves to the setting of undirected graphs, we get that the $R$-module 
$\MC_{k,l}(\tG;R)$ can also be defined as the submodule of $R_k(\tG;R)$ given by $(k+1)$-uples of vertices whose length is precisely~$l$;
this is compatible with the chain complex structure -- \emph{cf.}~\cite[Lemma~2.14]{asao}. In particular, we get the following:

\begin{remark}
    The magnitude chain modules $\MC_{k,l}(\tG;R)$ are sub-quotients of the free $R$-module $\Lambda_k(\tG;R)$.
\end{remark}

It is possible to modify the differential $\delta$ so to get a new differential $$\delta'\colon \MC_{k,l}(\tG;R)\to \MC_{k-1,l-1}(\tG;R) \ , $$ in turn inducing an homomorphism $\partial'\colon  \MH_{k,l}(\tG;R)\to \MH_{k-1,l-1}(\tG;R)$ between the magnitude homology groups. Equipped with this new differential, also  magnitude homology can be seen as a chain complex $(\MH_{k-*,l-*}(\tG;R),\partial')$. The homology of the resulting chain complex was denoted by $\mathcal{MH}_{k-*}^{l-*}(\tG)$ in \cite[Definition~2.21]{asao}. 

\begin{remark}[{\cite[Proposition~6.11]{asao}}]\label{rem:pathhom}
For $k=l$, the homology theory $\mathcal{MH}_{k-*}^{l-*}(\tG)$, for directed graphs, recovers the (reduced) path homology of directed graphs introduced in~\cite{Grigoryan_first}. The same proof, in the undirected setting, produces an isomorphism of $\mathcal{MH}_{k-*}^{l-*}(\tG)$ for undirected graphs, with the reduced path homology of graphs -- see, e.g.~\cite[Section~2.2]{barcelo} for the definition. 
\end{remark}

Magnitude homology is an homology theory of (directed) graphs, and it is functorial with respect to contractions. Recall that a contraction of a graph $\tG$ with respect to an edge~$e$ is the graph obtained from $\tG$ by contracting $e$ to a point. More specifically, if $\tG$ and $\tH$ are graphs, consider maps $\phi\colon \tG\to\tH$ of vertices that preserve or contract each edge of~$\tG$. Observe that such maps do not increase the length of tuples of vertices of $\tG$: that is we have
$
\ell(\phi(v_0),\dots,\phi(v_k))\leq \ell(v_0,\dots,v_k) 
$.
Every contraction $\phi\colon \tG\to\tH$ of graphs induces a chain map
\[
\phi_\# \colon \MC_{*,*}(\tG;R)\to \MC_{*,*}(\tH;R) 
\]
which to a tuple $(v_0,\dots,v_k)$ of $\tG$ associates the tuple $(\phi(v_0),\dots,\phi(v_k))$ if {the length} $\ell(\phi(v_0),\dots,\phi(v_k))$ {equals the length} $\ell(v_0,\dots,v_k)$, and it is set to be $0$ otherwise. The map $\phi_\#$ is a chain map, as it commutes with the differential $\delta$, and it induces a map
in magnitude homology. Recall that we denote by $\bC\Graph$ the category of graphs and contractions.

\begin{proposition}[{\cite[Proposition~3.3]{richardHHA}}]
    Magnitude homology is a functor
    \[
    \MH_{*,*}\colon \bC\Graph\to \mathbf{BiGrMod}_R
    \]
    from the contraction category of graphs to the category of bigraded $R$-modules.
\end{proposition}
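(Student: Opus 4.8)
The plan is to deduce functoriality directly from the chain-level maps $\phi_\#$ introduced just before the statement, using as essentially the only input the $1$-Lipschitz property of contractions already recorded above, $\ell(\phi(v_0),\dots,\phi(v_k))\le\ell(v_0,\dots,v_k)$. First I would check that $\phi_\#$ is well defined with the stated target: if $\phi$ preserves the length of $(v_0,\dots,v_k)$ then $d(\phi(v_i),\phi(v_{i+1}))=d(v_i,v_{i+1})\ge 1$ for every $i$, so $(\phi(v_0),\dots,\phi(v_k))$ is again a tuple with distinct consecutive vertices and of the same length $l$, hence lies in $\MC_{k,l}(\tH;R)$; otherwise $\phi_\#$ outputs $0$. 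Extending $R$-linearly gives $\phi_\#\colon\MC_{k,l}(\tG;R)\to\MC_{k,l}(\tH;R)$.

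The core step is that $\phi_\#$ commutes with $\delta$, which I would establish by splitting into cases according to whether $\phi$ preserves $\ell(v_0,\dots,v_k)$. If it does, one compares the two sums term by term: from $d(\phi(v_{i-1}),\phi(v_{i+1}))\le d(v_{i-1},v_{i+1})\le d(v_{i-1},v_i)+d(v_i,v_{i+1})=d(\phi(v_{i-1}),\phi(v_i))+d(\phi(v_i),\phi(v_{i+1}))$ one reads off that the $i$-th face of $(v_0,\dots,v_k)$ is nondegenerate of length $l$ and is carried by $\phi_\#$ to a nonzero tuple exactly when the $i$-th face of $(\phi(v_0),\dots,\phi(v_k))$ is nonzero — in which case the two outputs coincide, and otherwise both terms vanish. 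If $\phi$ strictly decreases the length, the left-hand side is $\delta(0)=0$; for the right-hand side, fix $j$ with $d(\phi(v_j),\phi(v_{j+1}))<d(v_j,v_{j+1})$, note that every face $\delta_i$ with $i\notin\{j,j+1\}$ retains the consecutive pair $(v_j,v_{j+1})$, on which $\phi$ already shortens, and so contributes $0$, while for $i\in\{j,j+1\}$ the triangle inequality shows the resulting face still loses length under $\phi$ and contributes $0$ too. A more conceptual packaging, in the spirit of the reduced description recalled above from \cite{asao} and the one I expect to be cleanest to write up: equip the reduced complex $R_\bullet(\tG;R)=\Lambda_\bullet(\tG;R)/I_\bullet(\tG;R)$ with the plain simplicial face differential $\sum_{i=1}^{k-1}(-1)^i d_i$ (no length restriction) and with the increasing length filtration, whose associated graded pieces are exactly the complexes $(\MC_{*,l}(\tG;R),\delta)$; then tuple-pushforward is a chain map by the usual simplicial identities — the only correction being that when $\phi$ identifies two adjacent vertices the two surviving faces cancel in pairs — and it is filtered by the Lipschitz property, so $\phi_\#$ is its induced map on associated graded and is a chain map for free.

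Granting that $\phi_\#$ is a chain map, it descends to $\phi_*\colon\MH_{k,l}(\tG;R)\to\MH_{k,l}(\tH;R)$ for all $k,l$, and the remaining verifications are purely formal: $(\id_\tG)_\#$ is the identity on tuples, and $(\psi\circ\phi)_\#=\psi_\#\circ\phi_\#$ holds tuple-by-tuple, where the chain $\ell(\psi\phi(v_0),\dots)\le\ell(\phi(v_0),\dots)\le\ell(v_0,\dots)$ matches up the scenarios in which either side collapses to $0$; both identities pass to homology. Assembling over all bidegrees $(k,l)$ yields $\MH_{*,*}\colon\bC\Graph\to\mathbf{BiGrMod}_R$. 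I expect the only genuinely delicate point to be the chain-map property of $\phi_\#$ in the length-decreasing case, where one must make sure the surviving face terms really are annihilated; the rest reduces to the triangle inequality and bookkeeping.
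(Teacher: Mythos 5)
Your main argument is correct and is essentially the verification the paper has in mind: it recalls the construction of $\phi_\#$ (image tuple if the length is preserved, $0$ otherwise) and defers the proof to Hepworth--Willerton, and your two-case check that $\phi_\#$ commutes with $\delta$ -- equality of lengths forcing $d(\phi(v_i),\phi(v_{i+1}))=d(v_i,v_{i+1})$ for every $i$, the triangle-inequality bookkeeping for the faces, and the chain $\ell(\psi\phi(x))\le\ell(\phi(x))\le\ell(x)$ for composites -- is exactly the standard direct proof.

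One caution about the ``more conceptual packaging'' you say you would prefer to write up: as stated it does not work. The plain inner-face differential $\sum_{i=1}^{k-1}(-1)^i d_i$ does not descend to the quotient $\Lambda_\bullet(\tG;R)/I_\bullet(\tG;R)$: for a tuple with $v_0=v_1$ (or $v_{k-1}=v_k$) the two faces that would cancel are $d_0$ and $d_1$ (resp.\ $d_{k-1}$ and $d_k$), and the outer face is absent from the sum, so $\delta$ of a degenerate tuple is not degenerate modulo $I$. So either keep the length condition in the differential from the start (as the paper and \cite{asao} do), or drop this packaging; your direct case analysis already suffices, so this does not affect the correctness of the proof, only of that remark.
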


By dualising the definition of magnitude homology, as customary, we get the definition of magnitude cohomology -- see \cite{MagnitudeCohomology} -- which we now recall:

\begin{defn}
\emph{Magnitude cohomology} ${\rm MH}^{*}_{*}$ is the cohomology of the  complex
\[ {\rm MC}_{l}^{k}(\tG; R) = {\rm Hom}({\rm MC}_{l,k}(\tG; R); R)\ ,\]    
equipped with the dual differential.
\end{defn}

This defines a functor with respect to the dual maps inducing functoriality in magnitude cohomology --  \emph{cf.}~\cite[Definition~2.2]{MagnitudeCohomology}. 
In particular, the dualisation defines a functor 
    \[
    \MH_{*}^{*}\colon {\bf CGraph}^{\op} \to \mathbf{BiGrMod}_R \ .
    \]
We recall that in ${\bf CGraph}^{\op}$ there is a morphism $\tG\to\tG'$ if, and only if, the graph $\tG$ is obtained from the graph~$\tG'$ by a sequence of contractions. We conclude the section observing that magnitude homology and magnitude cohomology are related by a universal coefficients short exact sequence by \cite[Remark~2.5]{MagnitudeCohomology}. Thanks to this short exact sequence, results on magnitude homology of graphs can be derived from results on magnitude cohomology, as customary. In the next subsections, we will restrict ourselves to the case of  magnitude cohomology. 

\subsection{Magnitude cohomology is finitely generated}
In order to prove that magnitude cohomology (in a fixed $k$-degree) is finitely generated, we exhibit a finitely generated module such that magnitude cohomology is a subquotient of it. 

Recall that  $\Graph$ denotes the category of graphs and minor morphisms and let $R$ be a commutative unital Noetherian ring.

\begin{defn}\label{def:vertexmodule}
 The \emph{vertex module} is the  functor 
$ \mathcal{V}\colon {\bf Graph} \to \Mod$,
which assigns to each graph $\tG$ the $R$-module
\[\mathcal{V}({\tG}) = R\left\langle v \mid v\in V(\tG) \right\rangle\]
freely generated by the vertices of $\tG$. 
To each minor morphism $\phi \colon \tG \to \tG'$, that is if $\tG'$ is obtained from $\tG$ via contractions and deletions, it assigns the map~$ \mathcal{V}(\phi)\colon \mathcal{V}({\tG}) {\longrightarrow} \mathcal{V}({\tG}')$ given by $ {v} \mapsto {\phi(v)} $.  
\end{defn}

Consider the restriction of the module $\mathcal{V}^{\otimes k}$ to the category $\bC\Graph$ of graphs and contractions. Denote by $ \MH_{k,*}(-;R)$ the sum $\bigoplus_{l}\MH_{k,l}(-;R)$.

\begin{lemma}\label{lemma:magnitudesubq}
For each $k$, the magnitude homology $ \MH_{k,*}(-;R)$ is a sub-quotient of the module~$\mathcal{V}^{\otimes k+1}$.
\end{lemma}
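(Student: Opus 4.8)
The key observation is that the magnitude chain module $\MC_{k,l}(\tG;R)$ is, by the earlier discussion, a subquotient of the free module $\Lambda_k(\tG;R) = R\langle (v_0,\dots,v_k)\mid v_i\in V(\tG)\rangle$, and this free module is visibly isomorphic to $\mathcal{V}(\tG)^{\otimes (k+1)}$ (a tensor product over $R$), with basis $x_{v_0}\otimes\cdots\otimes x_{v_k}$. However, tensor powers are not finitely generated as $\bC\Graph$-modules in general, so the first thing I would do is restrict attention to a \emph{single fixed length} $l$ and exploit the fact that length-$l$ tuples have controlled combinatorics: if $(v_0,\dots,v_k)$ has length $l$, then consecutive vertices are at distance at least $1$, so at most $l+1$ of the entries can be ``new'' in the sense that actually there are at most $l$ steps, and hence the tuple visits at most $l+1$ distinct vertices and its combinatorial type is bounded independently of $\#V(\tG)$. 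So the real content is to pass from a length-graded statement to the ungraded statement $\MH_{k,*}$.

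\textbf{Step 1: identify $\Lambda_k$ with a tensor power, hence a direct sum of copies of $\mathcal{V}$ only after choosing a vertex --- abandon this and instead argue directly.} Rather than tensor powers, I would realise $\MC_{k,*}(\tG;R)$ directly inside $\mathcal{V}(\tG)^{\oplus k}$ by sending a length-$l$ tuple $(v_0,v_1,\dots,v_k)$ to the element $(x_{v_1}, x_{v_2},\dots, x_{v_k}) \in \mathcal{V}(\tG)^{\oplus k}$, i.e.\ recording only the vertices $v_1,\dots,v_k$ in the $k$ summands (the entry $v_0$ is determined up to the finitely many choices of basepoint, or can be absorbed). The subtlety is that this map is \emph{not injective} since different length-$l$ tuples can have the same tail $(v_1,\dots,v_k)$ --- but there are only finitely many possible $v_0$'s compatible with a given tail and a given length, so after enlarging to finitely many copies (which is still finitely generated by Lemma~\ref{lem:directsum}) we get an injection. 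I would then check that this identification is natural with respect to contractions $\phi\colon\tG\to\tG'$: a contraction sends $x_v\mapsto x_{\phi(v)}$ on $\mathcal{V}$, and sends a length-$l$ tuple either to the corresponding tuple (if length is preserved) or to $0$; the ``or $0$'' part means the image $\MC_{k,*}(\tG;R)$ is not a submodule but a subquotient --- one quotients $\mathcal{V}^{\oplus k}$ by the image of tuples whose length drops.

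\textbf{Step 2: the subquotient structure and passing to homology.} Concretely, inside $\mathcal{V}^{\oplus k}$ (or a finite direct sum of copies of it) I would describe two nested $\bC\Graph$-submodules $\mathcal{A}\subseteq\mathcal{B}$ such that $\mathcal{B}/\mathcal{A}\cong \bigoplus_l \MC_{k,l}(-;R)$ as $\bC\Graph$-modules; $\mathcal{B}$ is spanned by the elements corresponding to tuples of strictly positive length in each segment, and $\mathcal{A}$ by the ``degenerate'' ones that a contraction can collapse. Then $\MH_{k,*}(\tG;R) = \bigoplus_l \H_k(\MC_{*,l}(\tG;R),\delta)$ is a subquotient of $\MC_{k,*}(\tG;R)$ (kernel of $\delta$ modulo image of $\delta$), hence a subquotient of a subquotient of $\mathcal{V}^{\oplus k}$, hence a subquotient of $\mathcal{V}^{\oplus k}$. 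Finally, I would invoke the earlier-established finite generation of $\mathcal{V}$ as a $\bC\Graph$-module (it is generated by the single-vertex graph, as every graph contracts onto a point and a point's vertex maps onto any vertex --- so $\mathcal{V}\cong\cP_{\tR(0)}$ essentially), Lemma~\ref{lem:directsum} to get $\mathcal{V}^{\oplus k}$ finitely generated, and then --- this is where genus must enter --- restrict to $\bC\Graph_g^\op$, which is quasi-Gr\"obner by Proposition~\ref{prop:contractcatfg}, so that subquotients of finitely generated modules are finitely generated by Theorem~\ref{rem:qGrobner is Noeth}.

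\textbf{Main obstacle.} The genuinely delicate point is bookkeeping the $v_0$ entry and the length grading simultaneously: the naive ``tail map'' forgets $v_0$, and I need the finitely-many-$v_0$ correction to be \emph{uniform} and compatible with contractions (a contraction can merge two tails that had different $v_0$'s). I expect the cleanest fix is to record $v_0$ \emph{relative to} $v_1$ by its distance and a bounded amount of combinatorial data rather than as an absolute vertex, or simply to note that for the purpose of this lemma one only needs a subquotient \emph{relation}, not an explicit formula, so one can take $\mathcal{B}$ to be the submodule of $\mathcal{V}^{\oplus(k+1)}$ generated by all $(x_{v_0},\dots,x_{v_k})$ with all $v_i$ distinct and consecutive, let $\mathcal{A}$ be the submodule generated by those where some contraction would drop the length, and verify $\mathcal{B}/\mathcal{A}$ receives a surjection from (a relabelling of) $\MC_{k,*}$ compatibly with $\delta$. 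Checking this compatibility with the differential $\delta$ --- which deletes a middle vertex only when length is preserved --- against the structure maps of $\mathcal{V}^{\oplus(k+1)}$ is the routine-but-fiddly part I would need to spell out carefully.
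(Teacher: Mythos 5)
Your Step~1 is where the argument breaks, and it cannot be repaired. The tail map $(v_0,\dots,v_k)\mapsto(x_{v_1},\dots,x_{v_k})$ with a ``finitely many $v_0$'s'' correction fails twice over. First, the number of admissible $v_0$'s for a fixed tail and fixed length is the number of vertices at a prescribed distance from $v_1$, which is not bounded over the category (in a star graph all $n-1$ leaves are at distance $1$ from the centre), so there is no uniform finite enlargement ``to finitely many copies''. Second, there is a rank obstruction that no bookkeeping avoids: $\mathcal{V}^{\oplus k}(\tG)$ is free of rank $k\cdot\#V(\tG)$, while $\MC_{1,1}(\tG;R)=\MH_{1,1}(\tG;R)$ is free of rank $2\,\#E(\tG)$ (both differentials touching bidegree $(1,1)$ vanish). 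If a free module of rank $m$ were a subquotient $\mathcal{B}/\mathcal{A}$ with $\mathcal{A}\subseteq\mathcal{B}\subseteq R^{n}$, the surjection $\mathcal{B}\to R^{m}$ would split, giving an embedding $R^{m}\hookrightarrow R^{n}$, hence $m\le n$ over any nonzero commutative ring. Since $\MH_{1,1}$ is a direct summand of $\MH_{1,*}$, taking $\tG$ a star with $n\ge 3$ vertices (a tree, so bounding the genus does not help) and $k=1$ gives $2(n-1)\le n$, a contradiction: the chain groups, and even the homology, simply do not fit inside finitely many copies of $\mathcal{V}$. The same count sinks the fallback in your last paragraph, where moreover ``$\mathcal{B}/\mathcal{A}$ receives a surjection from $\MC_{k,*}$'' is not the subquotient relation the lemma asserts.

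What the paper actually does is the observation you make at the very start and then abandon: $\MC_{k,l}(\tG;R)$ is, by its construction via $I_k(\tG;R)\subseteq\Lambda_k(\tG;R)$ and $\MC_{k,l}\subseteq R_k=\Lambda_k/I_k$ (it is spanned by part of a basis of $R_k$), a subquotient of the module $\Lambda_k(\tG;R)$ of all $(k+1)$-tuples of vertices, i.e.\ of $\mathcal{V}(\tG)^{\otimes(k+1)}$; the structure maps induced by contractions are exactly the maps given by identifying the vertices of contracted edges (tuples whose length drops die in passing to the subquotient), homology is a subquotient of the chain module, and summing over $l$ gives the statement. In other words the ambient object in the lemma is the tuple module $\Lambda_k$, which the paper's one-line proof identifies with the symbol $\mathcal{V}^{\oplus k}$; read literally as a $k$-fold direct sum the statement is false by the star-graph count above, so your instinct about the $\otimes$/$\oplus$ discrepancy was sound, but the resolution is to keep $\Lambda_k$ as the ambient module rather than to compress into $\mathcal{V}^{\oplus k}$. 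Note also that no finite generation, Gr\"obner, or bounded-genus input belongs in this lemma; those enter only in the later results.
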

\begin{proof}
It is sufficient to notice that $\Lambda_k(\tG)$ can be identified with $ \mathcal{V}^{\otimes k+1}(\tG)$, by identifying~$(v_0,...,v_k)$ with the elementary tensor $v_0\otimes ... \otimes v_k$.
The maps which induce the functoriality in magnitude homology are induced by the identification of the vertices of the contracted edges. 
Taking the directed sum over all lengths~$l$, we get the result.
\end{proof}

Magnitude cohomology ${\rm MH}^{*}_{*}$ is by definition  the cohomology of the dual complex of the  magnitude homology of graphs. If a module $M$ is a subquotient of a module $N$, then it is not generally true that ${\rm Hom}(M,R)$ is a subquotient of ${\rm Hom}(N,R)$. However, this is the case for magnitude cohomology; in fact, we have the following:   

\begin{proposition}
    \label{lem:MHsubq}
For each non-negative integer $k$, magnitude cohomology $ \MH^{k}_{*}(-;R)=\bigoplus_l \MH^{k}_{l}(-;R)$ is a sub-quotient of ${\rm Hom}_R(\mathcal{V}^{\otimes k + 1}, R)$.
\end{proposition}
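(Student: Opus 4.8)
The plan is to realize magnitude cohomology $\MH^k_l(\tG;R)$ not as the dual of the sub-quotient $\MC_{k,l}(\tG;R)$ of $\Lambda_k(\tG;R)=\mathcal V^{\oplus k}(\tG)$, but rather directly as a sub-quotient of $\Hom(\mathcal V^{\oplus k},R)$. The key observation is that, by Lemma~\ref{lemma:magnitudesubq}, for each $\tG$ we have inclusions and quotients realizing $\MC_{k,l}(\tG;R)$: concretely $\MC_{k,l}(\tG;R)$ is the sub-quotient of the free module $\Lambda_k(\tG;R)$ cut out by the pair $I_k(\tG;R)\subseteq \Lambda_k(\tG;R)$ (quotient by degenerate tuples) followed by selecting the length-$l$ summand. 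Since everything here is a \emph{free} $R$-module with a distinguished basis (tuples of vertices), these sub-quotients split: $\Lambda_k(\tG;R)$ decomposes as a direct sum indexed by the basis, $I_k$ is the span of a sub-basis, and the length decomposition is also a splitting by sub-bases. Dualizing a direct-sum decomposition of free modules with specified bases turns subspaces spanned by sub-bases into quotients and vice versa, compatibly. Hence $\Hom(\MC_{k,l}(\tG;R),R)$ is itself a sub-quotient of $\Hom(\Lambda_k(\tG;R),R)=\Hom(\mathcal V^{\oplus k}(\tG),R)$, naturally in $\tG$.

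First I would make precise the functoriality: restricting to $\bC\Graph^\op$, the assignment $\tG\mapsto \Hom(\mathcal V^{\oplus k}(\tG),R)$ is a $\bC\Graph^\op$-module (it is the composition of $\mathcal V^{\oplus k}$ with the contravariant $\Hom(-,R)$, so it is covariant on $\bC\Graph^\op$), and I would check that the identifications above are natural transformations. The point is that a contraction $\phi\colon\tG\to\tG'$ sends basis tuples to basis tuples (possibly collapsing some to degenerate ones, which land in $I_k$, or changing length, in which case the induced map is zero on the relevant summand by the definition of $\phi_\#$). So on the level of based free modules the maps respect the sub-bases defining $I_k$ and the length grading; dualizing, they respect the corresponding quotients and sub-objects. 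Therefore the sub-quotient presentation of $\Hom(\MC_{k,l}(\tG;R),R)$ inside $\Hom(\mathcal V^{\oplus k}(\tG),R)$ assembles into a sub-quotient of $\bC\Graph^\op$-modules.

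Next, I would take the direct sum over all lengths $l\in\bN$. For a fixed graph $\tG$ with finitely many vertices, only finitely many lengths $l$ occur, so $\MH^k_*(\tG;R)=\bigoplus_l \MH^k_l(\tG;R)$ is a genuine (finite, for each $\tG$) direct sum; the bound $\MH^k_l=0$ for $l$ large (say $l>k\cdot\mathrm{diam}(\tG)$) keeps this well-defined. Since $\MH^k_l(\tG;R)$ is the cohomology of the dual complex, it is a sub-quotient (take cocycles modulo coboundaries) of $\Hom(\MC_{k,l}(\tG;R),R)$, which we have just exhibited as a sub-quotient of $\Hom(\mathcal V^{\oplus k}(\tG),R)$; sub-quotients compose, and this is natural in $\bC\Graph^\op$, giving the claim.

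The step I expect to be the main obstacle is the clean bookkeeping of the dualization: one must verify that ``sub-quotient by sub-bases'' really does dualize to ``sub-quotient by sub-bases'' functorially, i.e.\ that the splittings can be chosen naturally in $\tG$. This is where the general warning ``$\Hom(M,R)$ need not be a sub-quotient of $\Hom(N,R)$ when $M$ is a sub-quotient of $N$'' is circumvented — it works here precisely because all modules in sight are free on canonical bases of tuples and all structure maps are ``monomial'' with respect to these bases, so the relevant short exact sequences are split in a basis-canonical (hence natural) way. Once this is isolated as a lemma about based free modules and monomial maps, the rest is a short assembly argument: apply it pointwise, observe naturality, sum over $l$, and pass to cohomology.
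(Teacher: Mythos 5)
Your argument is correct and follows essentially the same route as the paper: both rest on the fact that $\MC_{k,l}(\tG;R)$ sits inside the free module $R_k(\tG;R)=\Lambda_k(\tG;R)/I_k(\tG;R)$ as the span of a sub-basis, so dualizing behaves well -- the paper packages this via the short exact sequence $0\to\MC_{k,l}\to R_k\to Q_k\to 0$ with $Q_k$ free (hence $\Ext^1_R(Q_k(\tG;R),R)=0$), while you invoke the canonical basis splittings, which amounts to the same thing, and then both proofs pass to cohomology and sum over $l$. Your additional attention to naturality (the contraction maps only preserve the length filtration, and the $\phi_\#$-structure arises on the associated graded, which is why the splittings dualize compatibly) is a more explicit version of what the paper leaves implicit, not a different method.
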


\begin{proof}
    Fix $l$ a natural number and consider the magnitude chain group $\MC_{k,l}(\tG;R)$ of a graph $\tG$. This is a sub-quotient of $\mathcal{V}^{\otimes k+ 1}(\tG)$ by Lemma~\ref{lemma:magnitudesubq}. Observe that the modules $\Lambda_k(\tG;R), I_k(\tG;R)$ and $R_k(\tG;R)$ are free $R$-modules.  Thus,  
   ${\rm Hom}_R(R_k(\tG;R),R)$ can be seen as a submodule of ${\rm Hom}_R(\mathcal{V}^{\otimes k+ 1}(\tG;R),R) $ -- since $\Lambda_k(\tG;R) \cong \mathcal{V}^{\otimes k+ 1}(\tG)$ canonically. By definition, $R_k(\tG;R)\coloneqq \Lambda_k(\tG;R)/ I_k(\tG;R)$ consists of tuples of subsequently distinct elements of $V(\tG)$, and $\MC_{k,l}(\tG;R)$ is given by the tuples $(v_0,\dots, v_k)$ in $R_k(\tG;R)$ of length~$l$. Using the parameter $l$, we can filter the free module $R_k(\tG;R)$. Using such filtration, also the magnitude homology groups $\MC_{k,l}(\tG;R)$ are spanned by  elements of a basis of $R_k(\tG;R)$. 
    
    We have a short exact sequence
    \begin{equation}\label{eq:ses MC-R}
    0\to \MC_{k,l}(\tG;R)\to R_k(\tG;R) \to Q_k(\tG;R) \to 0
    \end{equation}
    where $Q_k(\tG;R)$ is the associated quotient, which is also a free $R$-module. 
    In particular, the group $\Ext^i_R(Q_k(\tG;R), M)$ are zero for all $i\geq 1$ and $R$-module $M$. Hence, dualising the short exact sequence in Eq.~\eqref{eq:ses MC-R}, we get the short exact sequence
    \begin{equation*}
    0\to {\Hom}_R( Q_k(\tG;R),R) \to {\rm Hom}_R(R_k(\tG;R),R) \to \underbrace{{\rm Hom}_R(\MC_{k,l}(\tG;R),R)}_{ = \MC^{k}_{l}(\tG;R))}  \to 0
    \end{equation*}
    from which the statement follows.
\end{proof}

Our goal is  now to show that the module  ${\rm Hom}_R(\mathcal{V}^{\otimes k},R)$ is finitely generated, for all~$k>0$. As we do not know whether the full category of graphs and minor morphisms, or simply contractions, is Noetherian, we shall restrict to the subcategory of graphs with bounded genus.

For any fixed graph $\tG$ the $R$-module ${\rm Hom}_R(\mathcal{V}^{\otimes k}(\tG),R)$ is freely generated (as  $R$-module) by the functions
$ \delta^{\tG}_{v_1 \otimes \cdots \otimes v_{k}}\colon\mathcal{V}^{\otimes k}(\tG) \longrightarrow R$,
defined by
\[\delta^{\tG}_{v_1 \otimes \cdots \otimes v_{k}}(w_1 \otimes \cdots \otimes w_{k}) = \begin{cases}
1 & \text{if } w_i = v_i\text{ for all }i,\\
0 & \text{otherwise.}
\end{cases}\]
Denote by 
\[ M_k(\tG) = {\rm span}_R(\psi_c(\delta^{\tG'}_{w_1\otimes \dots \otimes w_k}) \mid c\in {\rm Hom}_{{\bf CGraph}^{\op}}(\tG',\tG), \tG'\neq \tG,  w_i\in V(\tG')\ \forall i)\ ,\]
where $\psi_c \colon {\rm Hom}_R(\mathcal{V}^{\otimes k}(\tG'),R) \to {\rm Hom}_R(\mathcal{V}^{\otimes k}(\tG),R)$ denotes the map induced by the contraction $c\colon \tG\to\tG'$. More precisely, $\psi_c$ is just the pre-composition with the map $\cV(c)\colon \cV(\tG) \to \cV(\tG')$. 
We start by proving the following:

\begin{proposition}\label{prop:finite gen}
Given a connected graph $\tG$ with more than $k+1$ vertices, we have that 
$ {\rm Hom}_R(\mathcal{V}^{\otimes k}(\tG),R) =  M_k(\tG)$.
\end{proposition}

\begin{proof}
Pick $v_1,\dots, v_k\in V(\tG)$ 
and  indices $i_1,\dots,i_n$ such that the vertices $v_{i_1},\dots,v_{i_n}$ are distinct and  $\{ v_{i_1},\dots,v_{i_n}\} = \{v_1,\dots, v_k\}$. 
First, we need to establish some notation. Set $I_j \coloneqq \{ t \mid v_{t} = v_{i_j}\}$, for $j=1,\dots,n$, and 
\[ a_i = a_i(v_1,\dots, v_k) = \vert \{ j \mid \vert I_j\vert  = i \}  \vert\ , \]
that is $a_i$ is the number of $v_{i_j}$s which appear exactly $i$ times among $v_1,\dots, v_k$.
Set also 
\[ l = l(v_1,\dots, v_k) = \min\{d_{\tG}(v,w) \mid v, w \in V(\tG) \setminus \{v_1,\dots, v_k\}\}\ ,\]
with $d_{\tG}$ is the shortest path metric.
With this notation in place, we can associate to each $k$-tuple $(v_1,\dots, v_k)$ an element $\chi(v_1,\dots, v_k) \in \bN^{k+1}$ by setting
$ \chi(v_1,\dots, v_k)  \coloneqq (a_k, \dots, a_1, l)$.
The set of all possible values of $\chi$ is totally ordered by the lexicographic order $<_{\rm lex}$. 
We shall prove by recursion on the values of $\chi$ that  
\begin{equation}
\label{eq:statement rec}
\delta^{\tG}_{v_1 \otimes \cdots \otimes v_{k}} \in M_k(\tG)\ , 
\end{equation}
for all choices of $v_1,\dots, v_k\in V(\tG)$.  As $M_k(\tG)$ is by definition a submodule of the~$R$-module $ {\rm Hom}_R(\mathcal{V}^{\otimes k}(\tG),R)$, the statement will follow.

First, we prove the claim for all $(v_1,\dots, v_k)$ such that
$ \chi(v_1,\dots, v_k)  = (0,\dots,0,k,1)$,
which is the minimum possible value of $\chi$.
In this case, there are two vertices $v, w \in V(\tG) \setminus \{v_1,\dots, v_k\}$
which are the endpoints of an edge $e$. Then, if $c$ denotes the contraction of $e$, we have
\[  \psi_{c}(\delta^{\tG'}_{\overline{v}_1\otimes \dots \otimes \overline{v}_{k}}) = \delta^{\tG}_{v_1\otimes \dots \otimes v_{k}}\ , \]
where $\overline{v}_{k} \coloneqq c(v_k)$. 
This proves \eqref{eq:statement rec} in this case.

We recursively assume that \eqref{eq:statement rec} holds for all $(w_1,\dots,w_k)$ such that 
$\chi(w_1,\dots,w_k) <_{\rm lex} \chi(v_1,\dots,v_k)$.
Let $v, w \in V(\tG) \setminus \{v_1,\dots, v_k\}$ be  such that $d_{\tG}(v,w) = l(v_1,\dots,v_k)$. We have either~$d_{\tG}(v,w) = 1$ or $d_{\tG}(v,w) = l>1$. In the former case, with the same reasoning used for the minimal values of $\chi$, we get that \eqref{eq:statement rec} holds. Thus, we can assume $l>1$. In this case, any minimal path between $v$ and $w$  contains only $v_i$s. Let $v_{i_s}$ be the first vertex after $v$ on such a minimal path. 
Then, if $c$ denotes the contraction of an edge between $v$ and $v_{i_s}$, by definition of $\psi_c$ we have
\begin{equation}\label{eq:jfhja}
    \psi_{c}(\delta^{\tG'}_{\overline{v}_1\otimes \dots \otimes \overline{v}_{k}}) = \sum_{S\subseteq I_s} \delta^{\tG}_{x^{S}_1 \otimes \dots \otimes x^{S}_k}\ ,
\end{equation} 
where $x^{S}_j$ equals $v$ if $j\in S$, and $v_{j}$ otherwise.
Clearly, $x^{\emptyset}_1 \otimes \dots \otimes x^{\emptyset}_k = v_1 \otimes \dots\otimes v_k$.
Therefore we can rewrite~\eqref{eq:jfhja} as:
\begin{equation}\label{eq:kgafal}
    \delta^{\tG}_{v_1\otimes \dots \otimes v_{k}} =  \psi_{c}(\delta^{\tG'}_{\overline{v}_1\otimes \dots \otimes \overline{v}_{k}}) - \sum_{\tiny \begin{matrix}S\subseteq I_s\\ S\neq \emptyset \end{matrix}} \delta^{\tG}_{x^{S}_1 \otimes \dots \otimes x^{S}_k}\ .
\end{equation} 
We now claim that for all $S\subseteq I_s$ non-empty
$\chi(x^{S}_1, \dots , x^{S}_k) <_{\rm lex}   \chi(v_1,\dots,v_k)$.
If $S\neq \emptyset, I_s$, then $a_i(x^{S}_1, \dots , x^{S}_k) = a_i(v_1,\dots,v_k)$ for $i> \vert I_s \vert$, since all $v_j$s but $v_{i_s}$ appear with the same multiplicity -- and~$v$ and $v_{i_s}$ appear with multiplicity at most $\vert I_s \vert$. On the other hand, $a_{\vert I_s \vert}(x^{S}_1, \dots , x^{S}_k) < a_{\vert I_s \vert}(v_1,\dots,v_k)$ because $v_{i_s}$ appears now $\vert I_s \setminus S \vert $ times and $v$ appears $\vert S \vert$ times.
If $S = I_s$, then the $a_i$s are unchanged (since $v_{i_s}$ does not appear anymore, and $v$ appears $\vert I_s \vert$ times). Nonetheless, we have 
\[ l(x^{I_s}_1, \dots , x^{I_s}_k) \leq d_{\tG}(v_{i_s},w) = d_{\tG}(v,w) - 1 = l(v_1,\dots,v_k) -1 < l(v_1,\dots,v_k) \ ,\]
and thus $\chi(x^{S}_1, \dots , x^{S}_k) <_{\rm lex}   \chi(v_1,\dots,v_k)$, as claimed. 
Therefore, by recursive hypothesis we get
\[\sum_{\tiny \begin{matrix}S\subseteq I_s\\ S\neq \emptyset \end{matrix}} \delta^{\tG}_{x^{S}_1 \otimes \dots \otimes x^{S}_k} \in M_k(\tG)\ . \]
Thus, from \eqref{eq:kgafal}, we obtain that \eqref{eq:statement rec} also holds for $l>1$, concluding the proof. 
\end{proof}

We are now ready to prove the main theorem of this section. Fix an integer $k\geq 1$.

\begin{theorem}\label{thm:Vkfg}
The ${\bf CGraph}^{\op}_{\leq g}$-module  ${\rm Hom}_R(\mathcal{V}^{\otimes k},R)$ is finitely generated. Moreover, a set of generators is given by all graphs of genus at most $g$ with at most $k+1$ vertices.
\end{theorem}
\begin{proof}
Let $N_k$ be the sub-module  of the ${\bf CGraph}^{\op}_{\leq g}$-module ${\rm Hom}_R(\cV^{\otimes k}(-);R)$ spanned by the set~$ S = \{ \delta^{\tG}_{v_1\otimes \cdots \otimes v_k}\}$, where $\tG$ ranges among all graphs of genus at most $g$ with at most $k+1$ vertices, and~$v_1, \dots , v_k$ range among the vertices of $\tG$. These graphs are a finite number since the genus is bounded, and hence $S$ is a finite set.   

Clearly, ~$N_k(\tG) = {\rm Hom}_R(\cV^{\otimes k}(\tG);R)$ for all graphs with at most $k+1$ vertices.

We proceed by induction on the number of vertices to cover the remaining cases.
Suppose that, whenever $\tG'$ has less than $r >  k+1$ vertices~$N_k(\tG') = {\rm Hom}_R(\cV^{\otimes k}(\tG');R)$. Then, by Proposition~\ref{prop:finite gen}, every $\phi \in {\rm Hom}(\cV^{\otimes k}(\tG);R)$ can be written as 
\[ \phi = \sum_{i=1}^{n} \alpha_i \psi_{c_i}(\delta^{\tG_i}_{v^i_1\otimes \cdots \otimes v^i_k})\ ,\]
for some contractions $c_i\colon \tG \to \tG_i$, and some  $v^i_1, \dots , v^i_k \in V(\tG_i)$. Since the $c_i$s are contractions, $\tG_i$ has at most $r-1$ vertices for each $i$. Hence, $\delta^{\tG_i}_{v^i_1\otimes \cdots \otimes v^i_k} \in N_{k}(\tG_i)$ by inductive hypothesis. 
Since $\psi_{c_i}(N_{k}(\tG_i)) \subseteq N_{k}(\tG)$ for each $i$, the first part of the statement follows. To conclude, note that the generators of the module $N_k$ are all graphs of genus at most $g$ with at most $k+1$ vertices, in view of  Lemma~\ref{lem:fingengen}.
\end{proof}

\begin{corollary}\label{cor:mgfg}
The ${\bf CGraph}^{\op}_{\leq g}$-module  ${\rm MH}^{k}_{l}(-;R)$ is finitely generated.    
\end{corollary}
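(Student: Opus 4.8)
The plan is to deduce the corollary from the pieces already assembled in this section, without any fresh combinatorial work. First I would recall the two structural inputs: by Proposition~\ref{lem:MHsubq}, for each fixed cohomological degree $k$, the functor $\MH^k_*(-;R)=\bigoplus_l \MH^k_l(-;R)$ is a subquotient of ${\rm Hom}(\mathcal{V}^{\oplus k},R)$ as a ${\bf CGraph}^{\op}_{\leq g}$-module; and by Theorem~\ref{thm:Vkfg}, the ${\bf CGraph}^{\op}_{\leq g}$-module ${\rm Hom}(\mathcal{V}^{\oplus k},R)$ is finitely generated. The remaining ingredient is Noetherianity: since $R$ is commutative Noetherian and, by the Corollary following Proposition~\ref{cor:embeddedquasiGr} (together with Proposition~\ref{prop:contractcatfg} and Remark~\ref{rem:gskghksh}), the category ${\bf CGraph}^{\op}_{\leq g}$ is quasi-Gr\"obner, Theorem~\ref{rem:qGrobner is Noeth} tells us that $\Rep_R({\bf CGraph}^{\op}_{\leq g})$ is Noetherian. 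Hence every submodule and every quotient of a finitely generated module is again finitely generated.

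The argument then runs as follows. ${\rm Hom}(\mathcal{V}^{\oplus k},R)$ is finitely generated by Theorem~\ref{thm:Vkfg}. By Proposition~\ref{lem:MHsubq} there is a submodule $\mathcal{N}\subseteq {\rm Hom}(\mathcal{V}^{\oplus k},R)$ and a surjection $\mathcal{N}\twoheadrightarrow \MH^k_*(-;R)$. Since the ambient module is Noetherian, its submodule $\mathcal{N}$ is finitely generated; and since $\MH^k_*(-;R)$ is a quotient of the finitely generated module $\mathcal{N}$, it is finitely generated as well. Finally, $\MH^k_l(-;R)$ is a direct summand of $\MH^k_*(-;R)=\bigoplus_l \MH^k_l(-;R)$, hence a quotient (via the projection) of a finitely generated module, so it too is finitely generated. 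This gives exactly the statement of Corollary~\ref{cor:mgfg}. One should note the hypothesis shift relative to Theorem~\ref{thm:Vkfg}: there $R$ was an arbitrary ring with identity, whereas here we additionally require $R$ commutative Noetherian precisely so that Theorem~\ref{rem:qGrobner is Noeth} applies and the passage to subquotients is legitimate.

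I do not expect a genuine obstacle: all the work has been front-loaded into Theorem~\ref{thm:Vkfg} (the explicit finite generating set built from the graphs $\tS(m_1,m_2,m_3)$ and $\tR(m)$) and into the quasi-Gr\"obner property of ${\bf CGraph}^{\op}_{\leq g}$. The only points requiring a moment's care are bookkeeping ones: checking that ``subquotient'' in Proposition~\ref{lem:MHsubq} really means a quotient of a submodule (so that one application of Noetherianity followed by one passage to a quotient suffices), and checking that taking the degree-$l$ piece is an exact operation at the level of ${\bf CGraph}^{\op}_{\leq g}$-modules, which it is since the length grading is preserved by all morphisms in $\bC\Graph$ (contractions do not increase length, and the magnitude complex in length $l$ is functorial on its own). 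Both are immediate from the definitions recalled in Section~3.1, so the proof is a short formal deduction.

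\begin{proof}
By the Corollary following Proposition~\ref{cor:embeddedquasiGr}, the category ${\bf CGraph}^{\op}_{\leq g}$ is quasi-Gr\"obner; since $R$ is commutative Noetherian, Theorem~\ref{rem:qGrobner is Noeth} implies that $\Rep_R({\bf CGraph}^{\op}_{\leq g})$ is Noetherian, so that submodules and quotients of finitely generated ${\bf CGraph}^{\op}_{\leq g}$-modules are finitely generated. By Theorem~\ref{thm:Vkfg}, the module ${\rm Hom}(\mathcal{V}^{\oplus k},R)$ is finitely generated. By Proposition~\ref{lem:MHsubq}, $\MH^k_*(-;R)=\bigoplus_l\MH^k_l(-;R)$ is a quotient of a submodule $\mathcal{N}\subseteq{\rm Hom}(\mathcal{V}^{\oplus k},R)$; as $\mathcal{N}$ is finitely generated (being a submodule of a finitely generated Noetherian module), so is its quotient $\MH^k_*(-;R)$. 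Finally, $\MH^k_l(-;R)$ is a direct summand, hence a quotient, of $\MH^k_*(-;R)$, and is therefore finitely generated. Here we have used that the length grading is preserved by morphisms in $\bC\Graph$, so that the decomposition $\MH^k_*(-;R)=\bigoplus_l\MH^k_l(-;R)$ is a decomposition of ${\bf CGraph}^{\op}_{\leq g}$-modules.
\end{proof}
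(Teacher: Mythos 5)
Your proof is correct and follows essentially the same route as the paper's: quasi-Gr\"obnerness of ${\bf CGraph}^{\op}_{\leq g}$ plus Theorem~\ref{rem:qGrobner is Noeth} gives Noetherianity, Theorem~\ref{thm:Vkfg} gives finite generation of ${\rm Hom}(\mathcal{V}^{\oplus k},R)$, and Proposition~\ref{lem:MHsubq} exhibits magnitude cohomology as a subquotient. Your extra remarks (unwinding ``subquotient'' and noting that the length grading is preserved so the degree-$l$ piece is a summand) merely make explicit steps the paper leaves implicit.
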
 

\begin{proof}
  In view of Theorem~\ref{cor:contractcatfg}, sub-quotients of finitely generated ${\bf CGraph}^{\op}_{\leq g}$-modules are finitely generated. 
  By Theorem~\ref{thm:Vkfg}, the ${\bf CGraph}^{\op}_{\leq g}$-module ${\rm Hom}_R(\mathcal{V}^{\otimes k+1},R)$  is finitely generated. Therefore, the statement  follows from Proposition~\ref{lem:MHsubq}.
 \end{proof}

\section{Applications}

 A first consequence of the finite generation property described in Corollary~\ref{cor:mgfg} is a bound on the magnitude (co)homology ranks that depends on the number of edges of the considered graphs.  
Our first corollary is a general result for graphs of bounded genus, and the second is related to subsequent subdivisions of the  edges.

\begin{corollary}\label{cor:jafbka}
    Let $\bK$ be a field, and $g\geq 0$. Then, there exists a polynomial $f\in \bZ[t]$ of degree at most~$g+k+1$, such that, for all $\tG$ of genus at most $ g$, we have
    \[
    \dim_\bK \MH_*^k(\tG;\bK) \leq   f(\# E(\tG)) \ ,
\]
where $\# E(\tG)$ is the number of edges of $\tG$.
\end{corollary}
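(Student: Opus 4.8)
The plan is to deduce the polynomial bound from the finite generation result in Corollary~\ref{cor:mgfg}, following the strategy of~\cite{contractioncat}. First I would recall that, since $\bK$ is in particular a commutative Noetherian ring, Corollary~\ref{cor:mgfg} applies and gives that $\MH^k_l(-;\bK)$ is a finitely generated $\bC\Graph^\op_{\leq g}$-module; by Lemma~\ref{lem:fingengen}, there is a surjection $\bigoplus_{i=1}^n \cP_{c_i}\to \MH^k_*(-;\bK)$ for finitely many generating graphs $c_1,\dots,c_n$ of genus at most $g$. Taking dimensions pointwise, for every graph $\tG$ of genus at most $g$ we get
\[
\dim_\bK \MH^k_*(\tG;\bK) \leq \sum_{i=1}^n \dim_\bK \cP_{c_i}(\tG) = \sum_{i=1}^n \#\Hom_{\bC\Graph^\op_{\leq g}}(c_i,\tG) \ ,
\]
so it suffices to bound, for a fixed source graph $c$ of genus at most $g$, the number of contraction morphisms $c\to \tG$ in $\bC\Graph^\op_{\leq g}$ (equivalently, the number of contractions $\tG\to c$) by a polynomial in $\#E(\tG)$ of degree at most $g+1$.

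The key step is therefore the combinatorial count: a contraction $\tG\to c$ is determined by the choice of which edges of $\tG$ get contracted, i.e.\ by a subset $F\subseteq E(\tG)$ such that $\tG/F\cong c$ (together with the bookkeeping data of the identification, of which there are only finitely many, bounded independently of $\tG$ since $c$ is fixed). Because contractions preserve genus, we have $\mathrm{genus}(c)=\mathrm{genus}(\tG)=:h\leq g$, and the contracted edges $F$ must form a spanning forest of the union of the $\phi^{-1}(v)$'s; in particular $|E(\tG)|-|F| = |E(c)|$, so $|F| = \#E(\tG) - \#E(c)$ is \emph{fixed}. However, that alone would give a bound of degree $\#E(\tG)-\#E(c)$, which is not uniform. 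The right observation, as in~\cite{contractioncat, minor_twocolumns}, is that $F$ is the edge set of a spanning forest of the subgraph $\tG[F]$ on the non-loop non-$c$-edges, and such a subforest is determined by far fewer choices: one should count the number of ways to realise $c$ as a minor of $\tG$ via pure contractions, and this is controlled by the number of edges of $\tG$ \emph{not} in a maximal spanning tree together with a bounded amount of data — giving a polynomial of degree equal to the number of ``independent'' edges, which is $h+1 \leq g+1$. Concretely, I would argue that a contraction to $c$ is determined by: a spanning tree $\tT$ of $\tG$ (or rather a spanning tree of each tree-preimage), and the non-tree edges of $\tG$ that survive, whose number is $\mathrm{genus}(\tG)+(\#E(c)-\mathrm{genus}(c))$ — and one then bounds the number of such configurations by $\#E(\tG)^{g+1}$ up to a multiplicative constant depending only on $g$ and $c$.

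The main obstacle I expect is making precise that the count of contraction morphisms $c\to\tG$ grows polynomially of degree \emph{at most} $g+1$ and not of degree $\#E(\tG)-\#E(c)$: naively a contraction is a choice of $\#E(\tG)-\#E(c)$ edges to collapse, which is a polynomial of unbounded degree. The resolution is that these edges cannot be arbitrary — they must form a spanning-forest-type configuration inside the $h$-dimensional ``loop structure'' of $\tG$, so the genuine degrees of freedom are the $h+1$ loops/external edges rather than all the collapsed tree edges. I would handle this by invoking (or adapting) the counting lemma of~\cite{contractioncat} (their proof of the analogous dimension estimate for contraction categories), which already establishes exactly that $\#\Hom_{\bC\Graph_{\leq g}}(\tG,c)$ is $O(\#E(\tG)^{g+1})$; summing over the finitely many generators $c_i$ and absorbing constants then yields the desired polynomial $f\in\bZ[t]$ of degree at most $g+1$, and the bound on $\dim_\bK \MH^k_*(\tG;\bK)$ follows. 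Finally I would remark that the same argument with $\#E(\tG)$ replaced by $\#V(\tG)$ works up to the relation $\#E(\tG)=\#V(\tG)+\mathrm{genus}(\tG)-1$, explaining the phrasing in the introduction.
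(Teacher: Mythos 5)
There is a genuine gap at the central counting step. After reducing, via Lemma~\ref{lem:fingengen}, to bounding $\sum_i \#\Hom_{\bC\Graph^{\op}_{\leq g}}(c_i,\tG)$ for generators $c_i$ of the magnitude cohomology module obtained abstractly from Corollary~\ref{cor:mgfg}, you claim that the number of contractions $\tG\to c$ is $O(\#E(\tG)^{g+1})$ for any fixed $c$ of genus at most $g$. This is false: a contraction $\tG\to c$ is determined, up to a bounded number of identifications, by the set of surviving edges of $\tG$, and there are exactly $\#E(c)$ of those, so the count grows like $\#E(\tG)^{\#E(c)}$, not like $\#E(\tG)^{g+1}$. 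Concretely, take $c$ a path with two edges (genus $0$) and $\tG$ a path with $n$ edges: the number of contractions $\tG\to c$ is of order $n^2$, exceeding the degree $g+1=1$ you would need. So the ``genuine degrees of freedom'' are the edges of the target $c$, not its $g+1$ independent cycles, and since Corollary~\ref{cor:mgfg} gives no control whatsoever on $\#E(c_i)$, your argument only yields a polynomial bound of some unknown degree $\max_i\#E(c_i)$ (depending on $k$), not degree $g+1$. The counting statement you attribute to \cite{contractioncat} is also not what is proved there: their dimension estimate has degree equal to the number of edges of the \emph{generators}, which is why it alone cannot rescue the claim.

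The paper circumvents this precisely by never invoking generators of $\MH^k_*$ itself. By Proposition~\ref{lem:MHsubq}, $\MH^k_*(-;\bK)=\bigoplus_l\MH^k_l(-;\bK)$ is a subquotient of the explicit module $\Hom(\cV^{\oplus k},\bK)$, and the proof of Theorem~\ref{thm:Vkfg} exhibits generators of that module, namely the graphs $\tS(m_1,m_2,m_3)$ and $\tR(m)$ with $m_1+m_2+m_3,m\leq g$, all of which have at most $g+1$ edges. Since for dimension estimates a subquotient is bounded pointwise by the ambient module, applying \cite[Proposition~4.3]{contractioncat} to this explicitly generated module yields the polynomial of degree at most $g+1$. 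This also repairs a secondary slip in your write-up: Corollary~\ref{cor:mgfg} concerns a fixed length $l$, and it is the single ambient module $\Hom(\cV^{\oplus k},\bK)$ of Proposition~\ref{lem:MHsubq} that legitimises treating the direct sum over all $l$ at once. If you want to keep your route, you must replace the abstract generators $c_i$ by these explicit small generators of the ambient module; the rest of your dimension bookkeeping then goes through.
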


\begin{proof}
    The statement directly follows from \cite[Proposition 4.3]{contractioncat} after noticing that magnitude cohomology, in cohomological degree~$k$ and independently on the length degree~$l$, is a subquotient of a module which is finitely generated in degree $\leq g + k +1$ -- since it is generated  by graphs of genus $\leq g$ and at most $k+2$ vertices, cf.~Theorem~\ref{thm:Vkfg}. 
\end{proof}

Let $\tG$ be a graph of genus~$g$, $\underline{e}=(e_1,\dots,e_r)$ a tuple of distinct edges of $\tG$ which are not self-loops. We fix a direction on $e_1,\dots,e_r$.  This extra data is auxiliary, that is the choice of the direction is immaterial, but it is needed to explicitly write down the functor below. For a tuple  $\underline{m}=(m_i,\dots, m_r)$ of non-negative integers, we let $\tG(\underline{e},\underline{m})$ be the graph obtained from~$\tG$ by subdividing each edge $e_i$ a number of $m_i$ times. If $m_i=0$, then ``subdivision'' means ``contraction'' of the edge $e_i$. Denote by  $\mathbf{OI}$   the category of linearly ordered finite sets and ordered inclusions. Consider the product category $\mathbf{OI}^r$.  The directions on the edges $e_i$ have been chosen in order to construct a subdivision functor
\[
\Phi_{\tG,\underline{e}}\colon \mathbf{OI}^r\to {\bf CGraph}^{\op}_{\leq g}
\]
which associates to a linearly ordered set $[\underline{m}]\in\mathbf{OI}^r$ the graph $\tG(\underline{e},\underline{m})$, and to a morphism $[\underline{m}]\to [\underline{n}]$ in $\mathbf{OI}^r$ a contraction $\tG(\underline{e},\underline{n})\to \tG(\underline{e},\underline{m})$ -- see \cite[Section~4.2]{contractioncat} for the details. 
Then, Proposition~4.4 in~\cite{contractioncat} implies that, if $\mathcal{M}$ is a ${\bf CGraph}^{\op}_{\leq g}$-module which is a
subquotient of a module ${\bf CGraph}^{\op}_{\leq g}\to \Mod$ (with $R$ here a field) that is finitely generated in degrees $\leq d$, \emph{i.e.}~by graphs with at most $d$ edges, then the dimension of $\mathcal{M}(\tG(\underline{e},\underline{m}))$ is bounded by a polynomial in $\underline{m}$ of degree $\leq d$ -- \emph{c.f.}~\cite[Corollary~4.5]{contractioncat}.  As a consequence, we have the following:

\begin{corollary}\label{cor:hajfgak}
    Let $\bK$ be a field and $\tG$ be a graph of genus $g$. Then, there exists a polynomial $f_{\tG,\underline{e}}(x_1,\dots,x_r)$ of total degree at most $g+k+1$ such that
    \[
    \dim_{\bK} \MH_l^k(\tG(\underline{e},\underline{m});\bK)=  f_{\tG,\underline{e}}(m_1,\dots,m_r) \ ,
\]
provided  $\underline{m}$ is large enough in each entry.
\end{corollary}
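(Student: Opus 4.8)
The statement follows the same pattern as \cite[Corollary~4.5]{contractioncat}, so the plan is to reduce it to the machinery already set up in the excerpt. First I would recall that, by Corollary~\ref{cor:mgfg} (together with its proof via Proposition~\ref{lem:MHsubq} and Theorem~\ref{thm:Vkfg}), magnitude cohomology $\MH^k_l(-;R)$ is a subquotient of $\Hom(\mathcal{V}^{\oplus k},R)$, which is a finitely generated ${\bf CGraph}^{\op}_{\leq g}$-module generated in degree $\leq g+1$ --- that is, by the graphs $\tS(m_1,m_2,m_3)$ and $\tR(m)$ with $m_1+m_2+m_3,\, m\leq g$, each of which has at most $g+1$ edges. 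Since $R$ is a field, $\MH^k_l(-;R)$ is therefore itself a ${\bf CGraph}^{\op}_{\leq g}$-module that is a subquotient of a module finitely generated in degrees $\leq g+1$.

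Next I would pull this module back along the subdivision functor $\Phi_{\tG,\underline{e}}\colon \mathbf{OI}^r\to {\bf CGraph}^{\op}_{\leq g}$, which satisfies property (F) by \cite[Proposition~4.4]{contractioncat}. By the Proposition from \cite[Proposition~3.2.3]{Grobner_Cat} quoted in the excerpt, property (F) sends finitely generated modules to finitely generated modules, and --- tracking generators through the construction of $\Phi_{\tG,\underline{e}}$ exactly as in the paragraph preceding the statement --- the pullback $\Phi_{\tG,\underline{e}}^*\MH^k_l(-;R)$ is a subquotient of an $\mathbf{OI}^r$-module finitely generated in degrees $\leq g+1$ (here "degree" on $\mathbf{OI}^r$ means the cardinality of the underlying ordered set). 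Then I would invoke the standard polynomiality result for finitely generated $\mathbf{OI}^r$-modules over a field: the dimension of such a module evaluated at $[\underline m]$ agrees, for $\underline m$ sufficiently large in each coordinate, with a polynomial in $m_1,\dots,m_r$ whose total degree is bounded by the top generation degree, here $g+1$. Applying this to $\Phi_{\tG,\underline{e}}^*\MH^k_l(-;R)([\underline m]) = \MH^k_l(\tG(\underline e,\underline m);R)$ gives the desired polynomial $f_{\tG,\underline e}$ of total degree at most $g+1$.

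The only genuinely delicate point is the bookkeeping on generation degrees: I must make sure that (i) subquotients of a module finitely generated in degrees $\leq d$ are again finitely generated in degrees $\leq d$ (which uses that ${\bf CGraph}^{\op}_{\leq g}$ is quasi-Gröbner, hence Noetherian, by Proposition~\ref{prop:contractcatfg} and Theorem~\ref{rem:qGrobner is Noeth}, so the submodule is finitely generated, and its generators sit in degree $\leq d$ because they can be taken among images of the ambient generators), and (ii) property (F) for $\Phi_{\tG,\underline e}$ does not inflate the generation degree beyond $g+1$ --- this is where I would lean directly on the explicit description of $\Phi_{\tG,\underline e}$ and the finitely many comparison objects produced in the proof of \cite[Proposition~4.4]{contractioncat}. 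Everything else is a routine citation of the combinatorial-category toolkit recalled in Section~1. I expect this degree-tracking to be the main obstacle, but it is already essentially carried out in \cite[Section~4.2]{contractioncat}, and here the relevant top degree is pinned down to $g+1$ by the explicit generators $\tS(m_1,m_2,m_3)$ and $\tR(m)$ exhibited in Theorem~\ref{thm:Vkfg}.
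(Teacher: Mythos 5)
Your route is essentially the paper's: the paper's proof is a one-line citation of \cite[Corollary~4.5]{contractioncat}, observing that $\MH^k_l(-;R)$ is a subquotient of $\Hom(\mathcal{V}^{\oplus k},R)$, which by Theorem~\ref{thm:Vkfg} is finitely generated by the graphs $\tS(m_1,m_2,m_3)$ and $\tR(m)$, hence in degrees $\leq g+1$; your proposal just unpacks what that citation does (pull back along $\Phi_{\tG,\underline{e}}$ via property~(F), then eventual polynomiality of finitely generated $\mathbf{OI}^r$-modules over a field), using exactly the ingredients the paper recalls around the statement.

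One caveat on your point (i): the claim that a submodule of a module finitely generated in degrees $\leq d$ is again generated in degrees $\leq d$ ``because its generators can be taken among images of the ambient generators'' is not correct in general --- Noetherianity gives finite generation of the submodule, but possibly only by elements sitting in strictly larger degrees. Fortunately the argument does not need this: the statement you are invoking (and the one quoted in the paper before the corollary) is formulated for \emph{subquotients} of a module finitely generated in degrees $\leq d$, and the degree bound is obtained by comparing dimensions, not generation degrees. Concretely, after pulling back along $\Phi_{\tG,\underline{e}}$ the ambient module is finitely generated over $\mathbf{OI}^r$ in degrees $\leq g+1$, its subquotient $\Phi_{\tG,\underline{e}}^*\MH^k_l(-;R)$ is finitely generated by Noetherianity of $\mathbf{OI}^r$, hence its dimension is eventually polynomial, and since that dimension is bounded above by the dimension of the ambient module (eventually a polynomial of total degree $\leq g+1$), the total degree of $f_{\tG,\underline{e}}$ is at most $g+1$. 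With that correction your plan is sound and coincides with the paper's proof.
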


\begin{proof}
    The statement directly follows from \cite[Corollary~4.5]{contractioncat} since magnitude cohomology is a subquotient of a module which is finitely generated in degree~$\leq g + k +1$.
\end{proof}

Likewise, a similar result is obtained by considering the functor $\MH_*^k=\bigoplus_l\MH_l^k$, as the finite generation of the magnitude cohomology functor only depends on~$k$ and not on $l$. 

\begin{figure}[h]
    \centering
    \newdimen\R
\R=1.80cm
\begin{tikzpicture}[thick]
\draw[xshift=5.0\R, fill] (270:\R) circle(.05)  node[below] {$v_0$};
\draw[xshift=5.0\R,fill] (225:\R) circle(.05)  node[below left]   {$v_1$};
\draw[xshift=5.0\R,fill] (180:\R) circle(.05)  node[left] {$v_2$};
\draw[xshift=5.0\R,fill] (135:\R) circle(.05)  node[above left] {$v_3$};
\draw[xshift=5.0\R, fill] (90:\R) circle(.05)  node[above] {$v_4$};
\draw[xshift=5.0\R,fill] (45:\R) circle(.05)  node[above right] {$v_5$};
\draw[xshift=5.0\R,fill] (0:\R) circle(.05)  node[right] {$v_6$};
\draw[xshift=5.0\R,fill] (315:\R) circle(.05)  node[below right] {$v_{m}$};

\node[xshift=5.0\R] (v0) at (270:\R) { };
\node[xshift=5.0\R] (v1) at (225:\R) { };
\node[xshift=5.0\R] (v2) at (180:\R) { };
\node[xshift=5.0\R] (v3) at (135:\R) { };
\node[xshift=5.0\R] (v4) at (90:\R) { };
\node[xshift=5.0\R] (v5) at (45:\R) { };
\node[xshift=5.0\R] (v6) at (0:\R) { };
\node[xshift=5.0\R] (vn) at (315:\R) { };

\draw[thick, bunired] (v0)--(v1);
\draw[thick, bunired] (v1)--(v2);
\draw[thick, bunired] (v2)--(v3);
\draw[thick, bunired] (v3)--(v4);
\draw[thick, bunired] (v4)--(v5);
\draw[thick, bunired] (v5)--(v6);
\draw[thick, bunired] (vn)--(v0);

\draw[xshift=5.0\R, fill] (292.5:\R) node[below right] {$e_{m}$};
\draw[xshift=5.0\R,fill] (247.5:\R) node[below left] {$e_0$};
\draw[xshift=5.0\R,fill] (202.5:\R)   node[left] {$e_1$};
\draw[xshift=5.0\R,fill] (157.5:\R)  node[above left] {$e_2$};
\draw[xshift=5.0\R, fill] (112.5:\R)   node[above] {$e_3$};
\draw[xshift=5.0\R,fill] (67.5:\R) node[above right] {$e_4$};
\draw[xshift=5.0\R,fill] (22.5:\R) node[right] {$e_5$};
\draw[xshift=4.95\R,fill] (337.5:\R)  node {$\cdot$} ;
\draw[xshift=4.95\R,fill] (333:\R)  node {$\cdot$} ;
\draw[xshift=4.95\R,fill] (342:\R)  node {$\cdot$} ;
\end{tikzpicture}
\caption{The cycle graph $\tC_m$.} 
    \label{fig:Cm}
\end{figure}
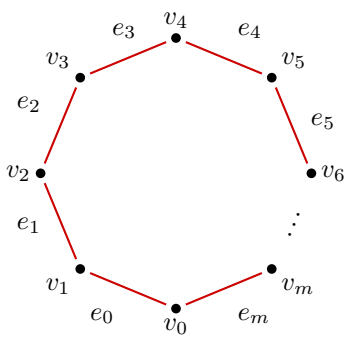

\begin{example}\label{ex:cycle}
Let $\tC_m$ be a cycle graph with $m$ edges -- see Figure~\ref{fig:Cm}. Subdivision of any edge of $\tC_m$ yields again a cycle. The magnitude homology groups of cycles have been computed in~\cite[Theorems~4.6 \&~4.8]{magnitue_discretemorsetheory}, proving a conjecture of~\cite[Appendix~A.1]{richardHHA}. For a fixed $k$, it can be shown that, in their notation,  the dimension satisfies:
\begin{equation}\label{eq:hfahafks}
T^m_{k,l} = \dim_\bZ {\rm MH}_{k,l}(\tC_{m}) = a(k,l) m + b(k,l) \ . 
\end{equation}
Note that $a(k,l), b(k,l) \neq 0$ for finitely many values of $l$, whose number is dependent on $k$. Hence for each $k$ we can set $A(k) = \sum_{l} a(k,l)$ and $B(k) = \sum_{l} b(k,l)$. It follows that $T^m_{k,l} \leq A(k)m + B(k)$.
\end{example}

Example~\ref{ex:cycle} implies that Corollary~\ref{cor:jafbka} and Corollary~\ref{cor:hajfgak} do not provide sharp results. However, they generalise what happens in the aforementioned example to graphs of  fixed or bounded genus.
More precisely, Corollary~\ref{cor:jafbka} is the analogue of the inequality $T^m_{k,l} \leq A(k)m + B(k)$ in Example~\ref{ex:cycle}. Corollary~\ref{cor:hajfgak} is the analogue of the formula  
in Equation~\eqref{eq:hfahafks}, for families of graphs obtained via subdivision of edges.
By  applying~\cite[Corollary~4.7]{contractioncat}, we get a similar statement when considering the operation of ``gluing'' trees to~$\tG$. We refer to \cite{contractioncat} for more applications. 

A second application, of main interest to us, concerns the behaviour of torsion in magnitude (co)homology.  
It was shown in \cite[Theorem 3.14]{SadzanovicSummers} that  any finitely generated Abelian group may appear as a subgroup of the
magnitude homology of a graph, and that there are infinitely many
such graphs. More precisely, Sadzanovic and Summers proved the following:

\begin{theorem}[{\cite[Theorem 3.13]{SadzanovicSummers}}]
Let $p$ be a prime and $n, m \geq 1$ integers. There exist infinitely many distinct
isomorphism classes of graphs whose magnitude homology contains $\bZ_{p^m}$ torsion in bigrading $(3, 2n+
3)$.
\end{theorem}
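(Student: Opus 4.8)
The plan is to argue in two stages: first realize $\bZ_{p^m}$ as a torsion subgroup of $\MH_{3,2n+3}(\tG;\bZ)$ for a single, explicitly constructed graph $\tG=\tG_{p,m,n}$, and then upgrade ``one graph'' to ``infinitely many pairwise non-isomorphic graphs'' by a soft wedge-sum argument. The first stage is where essentially all the work is; the second is formal.

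For the first stage I would use that the magnitude chain complex splits, in each length $l$, as a direct sum of subcomplexes indexed by the ordered pair $(x_0,x_k)$ of first and last entries of a tuple: $\MC_{*,l}(\tG;\bZ)=\bigoplus_{a,b\in V(\tG)}\MC_{*,l}(\tG;\bZ)_{a,b}$, where the $(a,b)$-summand is spanned by tuples $(a=x_0,x_1,\dots,x_k=b)$ of consecutively distinct vertices of total length $l$, and the differential $\delta$ preserves this splitting. Fix a pair $(a,b)$ with $d(a,b)=3$. Every tuple contributing to $\MC_{*,2n+3}(\tG;\bZ)_{a,b}$ then has excess length $(2n+3)-3=2n$, so its intermediate vertices trace out a geodesic from $a$ to $b$ which is allowed to ``detour'' by a total of $n$ extra back-and-forth steps --- the even excess $2n$ being exactly the parity constraint recorded in the bigrading $(3,2n+3)$. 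Following the combinatorial-to-topological dictionary of Kaneta--Yoshinaga~\cite{MR4275098} (elaborated in~\cite{SadzanovicSummers}) --- which in the diagonal case identifies $\MC_{*,k}(\tG;\bZ)_{a,b}$ with the reduced chain complex, in degree $k-2$, of a join of discrete ``layer'' sets --- the summand $\MC_{*,2n+3}(\tG;\bZ)_{a,b}$ with $k=3$ computes $\widetilde{H}_1$ of an explicit finite CW complex $Y=Y_{a,b}^{(n)}(\tG)$ built from the stratified poset of such detouring geodesics. For $n\geq1$ this complex has cells of dimension $\geq 2$ (coming from the families of ways of distributing the $n$ detours), so $\widetilde{H}_1(Y)$ may be \emph{any} finitely generated abelian group; for $n=0$ it collapses to a one-dimensional complex and $\widetilde{H}_1(Y)$ is free, consistent with the diagonal being torsion-free.

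The construction step is then to choose $\tG_{p,m,n}$, the pair $(a,b)$, and the pattern of pairwise distances among the intermediate vertices so that $Y_{a,b}^{(n)}(\tG_{p,m,n})$ is homotopy equivalent to a two-dimensional Moore space $M(\bZ_{p^m},1)$ --- i.e.\ the presentation $2$-complex of $\langle t\mid t^{p^m}\rangle$ --- realized combinatorially as a ``fan of geodesic corridors'' glued with $p^m$-fold symmetry, in the spirit of the graph families of~\cite{MR4275098,SadzanovicSummers}. One must then (i) check that no other endpoint summand $\MC_{*,2n+3}(\tG_{p,m,n};\bZ)_{a',b'}$ cancels this torsion --- automatic, since the decomposition is a genuine direct sum of complexes, so the $\bZ_{p^m}$ in the $(a,b)$-summand survives as a direct summand of $\MH_{3,2n+3}(\tG_{p,m,n};\bZ)$ --- and (ii) verify that $\delta$ matches the cellular boundary of $Y$ exactly, including the vanishing of degenerate faces and the sign conventions. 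I expect this construction step, and in particular the homotopy identification $Y_{a,b}^{(n)}(\tG_{p,m,n})\simeq M(\bZ_{p^m},1)$, to be the main obstacle: the detour complexes $Y_{a,b}^{(n)}(\tG)$ are not arbitrary, being order complexes of a poset rigidly determined by the metric of $\tG$, so producing one whose $\widetilde H_1$ is precisely $\bZ_{p^m}$ calls for a careful, somewhat ad hoc choice of $\tG$ followed by a genuine homotopy (or discrete Morse) computation on $Y$, at the level of effort of~\cite{MR4275098}.

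For the second stage, fix $(p,m,n)$, set $\tG_0=\tG_{p,m,n}$, and for each $j\geq1$ let $\tG_0\vee\tP_j$ denote the wedge, at a single vertex $w$, of $\tG_0$ with a path on $j$ edges. For $u,v\in V(\tG_0)$ one has $d_{\tG_0\vee\tP_j}(u,v)=d_{\tG_0}(u,v)$, and no point of $\tP_j\setminus\{w\}$ lies on a geodesic between two vertices of $\tG_0$ (exiting through $w$ and returning strictly increases length); consequently the subcomplex of $\MC_{*,*}(\tG_0\vee\tP_j;\bZ)$ spanned by the tuples with all entries in $V(\tG_0)$ is a \emph{direct summand} isomorphic to $\MC_{*,*}(\tG_0;\bZ)$. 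Hence $\MH_{3,2n+3}(\tG_0;\bZ)$, and in particular its $\bZ_{p^m}$-torsion, injects into $\MH_{3,2n+3}(\tG_0\vee\tP_j;\bZ)$. Since the graphs $\{\tG_0\vee\tP_j\}_{j\geq1}$ are connected, finite, and have pairwise distinct numbers of vertices, they are pairwise non-isomorphic, which yields the required infinite family. (Alternatively, one can repeatedly subdivide edges of $\tG_0$ that play no role in the chosen endpoint summand; this leaves that summand --- hence the $\bZ_{p^m}$-torsion in bigrading $(3,2n+3)$ --- unchanged while altering the isomorphism type.)
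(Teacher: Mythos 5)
First, note that this theorem is not proved in the paper you are working from: it is quoted verbatim from Sazdanovic--Summers, and the paper only records that its proof rests on the Kaneta--Yoshinaga construction, i.e.\ graphs built from triangulations of (generalised) lens spaces together with iterated edge subdivisions. Measured against that, your proposal has a genuine gap exactly where the theorem's content lies. Your ``first stage'' reduces the statement to producing a graph $\tG_{p,m,n}$ and endpoints $(a,b)$ whose ``detour complex'' $Y_{a,b}^{(n)}$ is a Moore space $M(\bZ_{p^m},1)$, and you then explicitly defer this (``I expect this construction step \dots to be the main obstacle''). But that construction \emph{is} the theorem: Kaneta--Yoshinaga and Sazdanovic--Summers do not conjure such a $Y$ ad hoc; they start from a simplicial complex $K$ with prescribed torsion (a triangulated generalised lens space), build a graph from $K$ so that $\widetilde{H}_*(K)$ embeds into magnitude homology, and then use subdivision to push the torsion into the bigrading $(3,2n+3)$ and to generate infinitely many examples. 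Without carrying out some version of this, nothing in your write-up actually exhibits $\bZ_{p^m}$-torsion anywhere, so the proposal is an outline rather than a proof. In addition, two of your supporting assertions need repair: the claim that the excess $\ell - d(a,b)$ is forced to be even is false in general graphs (an odd cycle already gives odd excess; parity constraints of this kind hold only in bipartite graphs), and the identification of a non-diagonal endpoint summand $\MC_{*,2n+3}(\tG;\bZ)_{a,b}$ with the reduced chains of a join of discrete layer sets is only available, in the cited literature, in the diagonal case $l=d(a,b)$ or under extra hypotheses; extending it to $l>d(a,b)$ is precisely where Sazdanovic--Summers have to work (via subdivision), not something one may quote.

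The parts of your argument that are sound are the peripheral ones. The splitting of $\MC_{*,l}(\tG;\bZ)$ by the ordered pair of endpoints is correct, since the differential never deletes the first or last entry. Your ``second stage'' is also correct: for the wedge $\tG_0\vee\tP_j$ at a vertex $w$, distances within $V(\tG_0)$ are unchanged, any face map deleting the unique vertex lying in $\tP_j\setminus\{w\}$ strictly decreases length and hence vanishes in the magnitude complex, so the tuples supported in $V(\tG_0)$ do form a chain-complex direct summand isomorphic to $\MC_{*,*}(\tG_0;\bZ)$, and counting vertices gives pairwise non-isomorphic graphs; your alternative via subdividing edges away from the relevant summand is closer to what Sazdanovic--Summers actually do. So the soft step is fine, but the hard step --- realising $\bZ_{p^m}$ in bigrading $(3,2n+3)$ for a single graph --- is missing, and that is the step the cited proof supplies through lens-space triangulations and subdivision.
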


The proof of \cite[Theorem 3.13]{SadzanovicSummers} is based on Kaneta-Yoshinaga construction~\cite{MR4275098}. Graphs whose integral magnitude homology has ${p^r}$-torsion are obtained from triangulations of (generalised) lens spaces and iterated subdivisions. 
However, there is no structural theorem concerning the complexity of graphs having given torsion in integral magnitude homology. 
The next result suggests that, in order to find more torsion in magnitude (co)homology, one needs to increase the combinatorial complexity of the graphs.

\begin{theorem}\label{thm:torsionmagnitude}
For every pair of integers $k, g\geq 0$, there exists $m = m(g,k) \in \bZ$ which annihilates the torsion subgroup of $\MH^{k}_{*}(\tG;\bZ)$, for each graph~$\tG$ of genus at most $g$. 
\end{theorem}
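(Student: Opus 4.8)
The plan is to deduce this from the finite generation established in Corollary~\ref{cor:mgfg}, together with the Noetherianity of $\Rep_\bZ({\bf CGraph}^\op_{\leq g})$ coming from Proposition~\ref{prop:contractcatfg} and Theorem~\ref{rem:qGrobner is Noeth}. First I would package the torsion into a functor: for each $k$, define $\cT^k\colon {\bf CGraph}^\op_{\leq g}\to \bZ\text{-}\mathbf{Mod}$ by $\tG\mapsto \mathrm{Tors}(\MH^k_*(\tG;\bZ))$, the torsion subgroup of $\bigoplus_l\MH^k_l(\tG;\bZ)$. One must check this is well defined on morphisms, i.e.\ that the maps induced by contractions carry torsion to torsion; this is automatic since any group homomorphism sends torsion elements to torsion elements, so $\cT^k$ is a subfunctor of $\MH^k_*(-;\bZ)$.

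Next, by Corollary~\ref{cor:mgfg} the $\bZ$-module-valued functor $\MH^k_*(-;\bZ)=\bigoplus_l\MH^k_l(-;\bZ)$ is a finitely generated ${\bf CGraph}^\op_{\leq g}$-module (here one uses that for fixed $k$ only finitely many lengths $l$ contribute nontrivially, or simply that the ambient module $\mathrm{Hom}(\cV^{\oplus k},\bZ)$ of Theorem~\ref{thm:Vkfg} is finitely generated and $\MH^k_*$ is a subquotient of it by Proposition~\ref{lem:MHsubq}). Since $\bZ$ is a (left) Noetherian ring and ${\bf CGraph}^\op_{\leq g}$ is quasi-Gr\"obner by Proposition~\ref{prop:contractcatfg}, Theorem~\ref{rem:qGrobner is Noeth} tells us that $\Rep_\bZ({\bf CGraph}^\op_{\leq g})$ is Noetherian; hence the submodule $\cT^k\subseteq \MH^k_*(-;\bZ)$ is itself finitely generated. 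Let $x_1\in\cT^k(\tG_1),\dots,x_N\in\cT^k(\tG_N)$ be a finite generating set, with each $x_i$ a torsion element, and set $m=m(g,k)\coloneqq \mathrm{lcm}(\mathrm{ord}(x_1),\dots,\mathrm{ord}(x_N))$.

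Finally I would argue that this $m$ annihilates $\cT^k(\tG)$ for every $\tG$ of genus $\leq g$. Indeed any $\tG$ of genus $\leq g$ is an object of ${\bf CGraph}^\op_{\leq g}$, and any element $y\in\cT^k(\tG)$ lies in the span of $\{x_1,\dots,x_N\}$: by definition of span in $\Rep_\bZ({\bf CGraph}^\op_{\leq g})$, $y=\sum_i \sum_j a_{ij}\,\cT^k(\gamma_{ij})(x_i)$ for suitable integers $a_{ij}$ and morphisms $\gamma_{ij}\colon \tG_i\to\tG$ in ${\bf CGraph}^\op_{\leq g}$. Since each $\cT^k(\gamma_{ij})$ is a group homomorphism, $m\cdot\cT^k(\gamma_{ij})(x_i)=\cT^k(\gamma_{ij})(m\,x_i)=\cT^k(\gamma_{ij})(0)=0$, whence $m\,y=0$. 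As $\MH^k_l$ is a direct summand-indexing piece of $\MH^k_*$, the same $m$ annihilates the torsion of $\MH^k_l(\tG;\bZ)$ for every $l$, which is the assertion. The only genuinely nontrivial input is the already-established chain Corollary~\ref{cor:mgfg} $\Rightarrow$ finite generation; the potential subtlety to watch is that the genus-$g$ hypothesis in the statement is subsumed by working in the bounded-genus category ${\bf CGraph}^\op_{\leq g}$, and that passing from $\MH^k_*$ to its torsion subfunctor indeed stays inside the Noetherian category, which it does since Noetherianity is inherited by submodules of finitely generated modules. The main obstacle, such as it is, is purely bookkeeping: making sure $\cT^k$ is honestly a subfunctor and that "finitely generated" is used with the correct ring $\bZ$ rather than a field.
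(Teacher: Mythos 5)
Your proposal is correct and follows essentially the same route as the paper: use Corollary~\ref{cor:mgfg} (finite generation of magnitude cohomology as a ${\bf CGraph}^{\op}_{\leq g}$-module), Noetherianity of $\Rep_\bZ({\bf CGraph}^{\op}_{\leq g})$ from Proposition~\ref{prop:contractcatfg} and Theorem~\ref{rem:qGrobner is Noeth}, pass to the torsion subfunctor, and take the least common multiple of the orders coming from a finite generating set. The only cosmetic difference is that you work with $\MH^k_*=\bigoplus_l\MH^k_l$ directly (correctly noting that only finitely many $l$ contribute for a fixed $k$ and finite graph), while the paper fixes $l$ and observes the bound is independent of $l$; both are fine.
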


\begin{proof}
First, note that magnitude homology and magnitude cohomology are related by a universal coefficients short exact sequence by \cite[Remark~2.5]{MagnitudeCohomology}, hence we can restrict to magnitude cohomology (and results for magnitude homology will be derived by application of such short exact sequence). 

Fix a degree $k$, and take $R=\bZ$ the integers. By Corollary~\ref{cor:mgfg}, the ${\bf CGraph}^{\op}_{\leq g}$-module  $\MH^{k}_{l}(-;\bZ)$ is finitely generated. Let $\tG$ be a graph of bounded genus $\leq g$, and $\tau$ a torsion class in $\MH^{k}_{l}(\tG;\bZ)$. For $\phi\colon \tH\to \tG$ a contraction of graphs, we get by functoriality a map in magnitude cohomology that preserves the torsion class. Therefore, we can consider  the submodule $\mathcal{T}\subseteq \MH^{k}_{l}(-;\bZ)$ which sends a graph $\tG$ to the $\bZ$-module $ \MH^{k}_{l}(\tG;\bZ)$. Then, by Corollary~\ref{cor:contractcatfg}, the ${\bf CGraph}^{\op}_{\leq g}$-module $\mathcal{T}$ is also finitely generated. But, by definition, this means that there exist graphs~$\tG_1,\dots,\tG_{m(k)}$ of genus bounded by~$ g$, and a surjection $\bigoplus_{i=1}^{m(k)}\mathcal{P}_{\tG_i}\to \mathcal{T}$ from the associated principal projectives. Now, choose $N$ to be the least common multiple of the annihilators of $\mathcal{T}(\tG_1),\dots, \mathcal{T}(\tG_{m(k)})$; then, for any graph~$\tG$ in ${\bf CGraph}^{\op}_{\leq g}$, the torsion part of $\MH^{k}_{l}(\tG;\bZ)$ has exponent at most $N$. This concludes the proof.
\end{proof}

From Remark~\ref{rem:pathhom} and \cite{asao} follows that (reduced) path homology, as introduced in \cite{Grigoryan_first}, appears as the diagonal in the second page of a spectral sequence whose $0$-th page features magnitude chain groups. Shifting to later pages in the spectral sequence is obtained by subsequent subquotients. Therefore, after restricting to undirected graphs, Theorem~\ref{thm:Vkfg} yields, with the same proof of Theorem~\ref{thm:torsionmagnitude}, the following:

\begin{corollary}\label{cor:torionPH}
For each $g,k$ positive integers, there exists a $d = d({g,k}) \in \bZ$ such that, for each graph $\tG$ of genus $g$, the torsion part of the path cohomology $\mathrm{PH}^k(\tG,\bZ)$  has exponent at most $d$.
\end{corollary}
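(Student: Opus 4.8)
The plan is to transcribe the proof of Theorem~\ref{thm:torsionmagnitude}, with path cohomology in place of magnitude cohomology, using Asao's spectral sequence as the bridge between the two. Recall from Remark~\ref{rem:pathhom} and~\cite{asao} that, for undirected graphs, there is a spectral sequence whose $E_0$-page is built from the magnitude chain groups $\MC_{*,*}(\tG;\bZ)$ and whose second page carries, along the diagonal, the reduced path homology of~\cite{Grigoryan_first}. The first thing I would check is that this spectral sequence is natural with respect to contractions of graphs: this is immediate, since its $E_0$-page is functorial on $\bC\Graph$ by \cite[Proposition~3.3]{richardHHA}, and every differential, filtration and induced subquotient on the later pages is manufactured naturally out of the $E_0$-page. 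In particular, reduced path homology in a fixed degree is a functor on $\bC\Graph$, and it is an iterated subquotient, over $\bZ$, of the magnitude chain groups.

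Next I would dualise and exhibit path cohomology as a subquotient of $\Hom(\mathcal{V}^{\oplus k},\bZ)$. As in the proof of Proposition~\ref{lem:MHsubq}, the crucial point is that all the $\bZ$-modules at play — the modules $\Lambda_k$, $I_k$, $R_k$ of the magnitude construction, and the path chain groups, which sit inside $R_k=\Lambda_k/I_k$ with free cokernel — are free, so that $\Hom_\bZ(-,\bZ)$ carries the relevant sub- and quotient-functor relations through. Consequently the path cochain groups in degree~$k$ form a subquotient of $\Hom(\Lambda_k,\bZ)=\Hom(\mathcal{V}^{\oplus k},\bZ)$, and therefore so does their cohomology $\mathrm{PH}^k(-;\bZ)$. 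Here I would invoke Lemma~\ref{lemma:magnitudesubq} for the identification $\Lambda_k(\tG)=\mathcal{V}^{\oplus k}(\tG)$, and Proposition~\ref{lem:MHsubq} for the dualisation bookkeeping, which applies essentially verbatim.

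With that in hand the argument closes exactly as for Theorem~\ref{thm:torsionmagnitude}. By Theorem~\ref{thm:Vkfg} the ${\bf CGraph}^{\op}_{\leq g}$-module $\Hom(\mathcal{V}^{\oplus k},\bZ)$ is finitely generated; by Proposition~\ref{prop:contractcatfg} the category ${\bf CGraph}^{\op}_{\leq g}$ is quasi-Gr\"obner, so by Theorem~\ref{rem:qGrobner is Noeth} subquotients of finitely generated modules are finitely generated, whence $\mathrm{PH}^k(-;\bZ)$ is a finitely generated ${\bf CGraph}^{\op}_{\leq g}$-module. Then, since contractions induce maps of path cohomology sending torsion to torsion, the assignment sending $\tG$ to the torsion subgroup of $\mathrm{PH}^k(\tG;\bZ)$ is a submodule $\mathcal{T}\subseteq \mathrm{PH}^k(-;\bZ)$, hence finitely generated; picking a surjection $\bigoplus_{i=1}^{n}\mathcal{P}_{\tG_i}\to\mathcal{T}$ with the $\tG_i$ of genus $\leq g$, and letting $d$ be the least common multiple of the exponents of the finite groups $\mathcal{T}(\tG_1),\dots,\mathcal{T}(\tG_n)$, one obtains that $d$ annihilates the torsion of $\mathrm{PH}^k(\tG;\bZ)$ for every $\tG$ of genus at most~$g$ — in particular for genus exactly $g$, as in the statement.

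The step I expect to be the main obstacle is the second one: making the passage from the homological picture, where path homology is plainly a subquotient of the magnitude chains over $\bZ$, to the cohomological one, genuinely functorial on ${\bf CGraph}^{\op}_{\leq g}$. One must be careful that dualising the successive pages of Asao's spectral sequence — whose intermediate terms (magnitude homology) are \emph{not} free over $\bZ$ — still realises path cohomology as a subquotient of $\Hom(\mathcal{V}^{\oplus k},\bZ)$; the clean way around this is to work instead with the path cochain complex directly, using that the path chain groups form a subfunctor of $R_k=\Lambda_k/I_k$ with free cokernel, exactly in the spirit of Proposition~\ref{lem:MHsubq}. Once this is set up, no new combinatorial input beyond Theorem~\ref{thm:Vkfg} is required.
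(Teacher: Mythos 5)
Your proposal is correct and follows essentially the same route as the paper: realise (reduced) path homology via Asao's spectral sequence as an iterated subquotient of the magnitude chain groups, dualise to exhibit $\mathrm{PH}^k(-;\bZ)$ as a subquotient of the finitely generated ${\bf CGraph}^{\op}_{\leq g}$-module $\Hom(\mathcal{V}^{\oplus k},\bZ)$ from Theorem~\ref{thm:Vkfg}, and then repeat the torsion argument of Theorem~\ref{thm:torsionmagnitude} verbatim. Your extra care with the dualisation step (working with the path cochain complex directly, using freeness of the relevant cokernels over $\bZ$) merely fills in a detail the paper leaves implicit in its ``same proof'' remark, so no separate comparison is needed.
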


\bibliographystyle{alpha}
\bibliography{bibliography}

\end{document}